\documentclass[12pt, reqno]{amsart}

\title[Bridges of Markov counting processes]{Bridges of Markov counting processes. Reciprocal classes and duality formulas}


\author{Giovanni Conforti}
\address{Institut f\"ur Mathematik der Universit\"at Potsdam. Am Neuen Palais 10. 14469 Potsdam, Germany}
\email{giovanniconfort@gmail.com}

\author{Christian L\'eonard }
\address{Modal-X. Universit\'e Paris Ouest. B\^at.\! G, 200 av. de la R\'epublique. 92001 Nanterre, France}
\email{christian.leonard@u-paris10.fr}

\author{R\"udiger Murr}
\address{Institut f\"ur Mathematik der Universit\"at Potsdam. Am neuen Palais 10. 14469 Potsdam, Germany}
\email{rudiger.murr@gmail.com}

\author{Sylvie R\oe lly}
\address{Institut f\"ur Mathematik der Universit\"at Potsdam. Am Neuen Palais 10. 14469 Potsdam, Germany}
\email{roelly@math.uni-potsdam.de}

\thanks{The authors are grateful to the French-German University (UFA/DFH) who supported this work through the CDFA 01-06. 
Second author was partially supported by the ANR project GeMeCoD. ANR 2011 BS01 007 01}
 
 \keywords{}
 \subjclass[2010]{}

\usepackage{amssymb, amsmath, amsfonts, latexsym, enumerate}
\usepackage[english]{babel}
\usepackage[LGR,T1]{fontenc}
\usepackage[utf8]{inputenc}
\RequirePackage[colorlinks,linkcolor=blue,citecolor=blue,urlcolor=blue]{hyperref}

\usepackage[usenames,dvipsnames]{pstricks} 

\setcounter{tocdepth}{1}
 \oddsidemargin 0cm \evensidemargin 0cm\topmargin 0cm

\textheight 24cm \textwidth 16cm

\newtheorem{theorem}[equation]{Theorem}
\newtheorem{lemma}[equation]{Lemma}
\newtheorem{proposition}[equation]{Proposition}
\newtheorem{corollary}[equation]{Corollary}

\newtheorem{definition}[equation]{Definition}

\theoremstyle{remark}
\newtheorem{remark}[equation]{Remark}

\newtheorem{example}[equation]{Example}

\numberwithin{equation}{section}

\newcommand{\R}{\mathbb{R}}
\newcommand{\Z}{\mathbb{Z}}
\newcommand{\N}{\mathbb{N}}
\newcommand{\Rec}{\mathfrak{R}}

\newcommand{\PO}{\mathcal{P}(\Omega)}

\newcommand{\Poi}{\mathbf{P}}
\newcommand{\Pl}{P_\ell}
\newcommand{\Rl}{R_\ell}
\newcommand{\El}{E_{\ell}}

\newcommand{\EE}{ \mathbf{E}}

\newcommand{\1}{\mathbf{1}}

\newcommand{\e}{\varepsilon}
\newcommand{\I}{[0,1]}

\newcommand\OO{\Omega}

\newcommand{\mC}{\mathcal{C}}

\newcommand{\DD}{\mathcal{D}}

\newcommand{\uu}{\dot u}
\newcommand{\UU}{ \mathcal{U}}

\begin{document}

\begin{abstract} 
Processes having the same bridges are said to belong to the same reciprocal class. In this article we analyze reciprocal classes of Markov counting processes by identifying their reciprocal invariants and we characterize them  as the set of counting processes satisfying some duality formula. 
\end{abstract}

\subjclass[2010]{60H07; 60G51; 60G57}
\keywords{Markov counting process; duality formula; reciprocal class}

\maketitle
\tableofcontents

\section*{Introduction}
The theory of reciprocal processes is rooted in an  early work of Schrödinger \cite{Sch31} on constrained Brownian particles.
It was   developed in the framework of diffusion processes by Jamison, Krener, Thieullen and Zambrini among other contributors, see  \cite{Roe13,LRZ12} for  recent  reviews on this topic. 
After Jamison \cite{Jam74}, it was clear that a central notion of the theory was that of \emph{reciprocal class}  of a given  stochastic process, which is the set of all processes that share their bridges with  this process.

The  aim of the present article  is to investigate  the reciprocal class of the easiest possible processes  with jumps, namely the counting processes. Although based on simple processes, it appears that this reciprocal  structure  is  interesting.

These simple processes with jumps, which we call \emph{nice Markov counting} (NMC, for short) processes and include the standard Poisson process, are introduced in the first section together with their reciprocal classes. It is proved at  Theorem \ref{thm:clarkpoisson}  that two NMC processes belong to the same reciprocal class, i.e.\ have the same bridges,  if and only if  some specific time-space functions derived from their intensities of jumps coincide. This common function is called the   {\it reciprocal invariant} of the reciprocal class.
 We derive at  Lemma \ref{prop:ibpfunit} a duality relation between some stochastic integral and a derivative operator  built on variations of the instants of  jump, which holds  for any  NMC process.
It will lead us to our main result which states at Theorem \ref{thm:derivcharacrec} a characterization of the reciprocal class of a given NMC process by means of a duality formula involving both the reciprocal invariant of the class and small variations of the times of jumps. 
  
This article is partly based on the third author's PhD thesis \cite{Murr12}. Our results are analogous to those of Thieullen and the last author \cite{RT02, RT05} that were obtained in the framework of Brownian diffusion processes. However, the proofs of the present paper  differ significantly from those of these earlier works. 
Recently, similar results  have been obtained in \cite{CDPR14}  for  compound Poisson processes, using variations based on space perturbations, rather than time perturbations as in the present article.

\section{Counting processes and their reciprocal classes}
 
The basic object of this paper, 
called {\it  nice Markov counting}  process (NMC process, for short) is described, the reciprocal class of an NMC process is defined  and its relation with some 
$h$-transforms is made precise. Theorem \ref{thm:clarkpoisson} states a characterization of the reciprocal class of an NMC process in terms of its reciprocal invariant.

\subsection*{Framework, definitions and notation} 
\label{sec:framework}

The sample  path space $ \Omega$ of the counting  processes consists of all c\`adl\`ag step functions with finitely many  jumps with amplitude $+1$ and an initial value in $\Z$. 
Any path $ \omega\in\OO$ is described by the collection $(x;t_1,\dots,t_n)$ of its initial position $x\in\Z$ and its $n= \omega_1- \omega_0$ instants of jumps $0<t_1<\cdots<t_n<1.$ It is practical to set $t_i=1,$ for all $i>n$ and identify $ \omega$ with $[x;(t_i)_{ i\ge 1}].$ We denote $X_0( \omega):=x$ and $T_i( \omega):=t_i$ the $i$-th instant of jump of $ \omega.$

The canonical counting process is denoted by $X=(X_t) _{ 0\le t\le 1}$  and 
$\mathcal{P}(\mathbf{E})$ denotes the space of all probability measures on a measurable space $\mathbf{E}$. For any $Q \in \PO$
and $t \in [0,1]$, the marginal law of $Q$ at time $t$ is denoted by 
$	
Q_t := Q \circ X_t^{-1} \, \in \mathcal{P}(\Z) 
$
and
$
Q_{01}:= Q \circ (X_0, X_1)^{-1} \, \in \mathcal{P}(\Z^2)
$	
is the endpoint marginal law of $Q$.

Any $Q\in\PO$ admits an   increasing process
denoted by $A:[0,1]\times \Omega \rightarrow \R_+$ such that $Q(A(0)= 0)=1$ and 
$	t \mapsto X_t - X_0 - A(t)
$  is a local $Q$-martingale,
(see Jacod \cite[Thm.\ 2.1]{Jac75}, for instance) which characterizes the dynamics of $Q$.
When the compensator is absolutely continuous, 
we call its derivative  the {\it intensity} of $Q$.

Let us introduce the set of the reference processes of this paper.

\begin{definition} \label{def:regularmarkov}
Let $\ell:\I\times\Z\to(0,\infty)$ be  a positive and  upper bounded function: 
$
0<\ell(t,z) \le \bar\lambda ,
$ 
for some $\bar\lambda >0$ and for all $(t,z)\in \I\times\Z,$
such that for each  $z\in \Z,$  $t \mapsto \ell(t,z)$ is in $\mC^1(\I).$
\\
Such a function $\ell$ is called a \emph{nice Markov counting} intensity, an  {NMC} intensity for short. 
A counting process with an NMC intensity  is called  an  {NMC} process. 
\end{definition}
Each NMC process belongs to $\PO$
and the strict  positivity  of the intensity implies that two NMC are absolutely continuous with respect to each other.

Let $\ell$ be an NMC intensity. For any $x\in \Z,$ we define $R _{ \ell}^x \in\PO$ as the law of the NMC process starting from $x$ with intensity $\ell$ and we denote 
for any probability measure $ \mu\in \mathcal{P}(\Z)$, 
$
P ^{ \mu}_\ell(\cdot):=\int _{ \Z}R^x_\ell(\cdot)\, \mu(dx)\in\PO
$
the law of the counting process with intensity $\ell$ and initial law $ \mu.$ 
When the initial law $ \mu$ is not relevant, we drop the corresponding superscript and simply write $\Pl$.
Let us introduce the kernel
\begin{equation}\label{eq-kern}
 \big(R^{ xy}_{ \ell}:=R^x _{ \ell}(\cdot\mid X_1=y); x\le y\in\Z\big) 
\end{equation}
which is a family in $\PO$ indexed by the endpoints $(x,y).$
Clearly, $\Rl ^{ xy}$ is a regular version of the $xy$-bridge of the unbounded  $ \sigma$-finite measure $R_\ell:=\sum _{ x\in\Z}R^x _{ \ell}$ which  is defined everywhere, i.e.\ for all $x\le y\in\Z.$
Of course, $\Pl ^{ xy}=\Rl^{ xy}$ and $\Pl ^{ \mu}(\cdot\mid X_0=x)=\Rl^x$. The kernel \eqref{eq-kern} is the basic dynamical object that will be used later on. It  is uniquely determined by the intensity $\ell.$

\begin{example}\label{expl-Poi}
A fundamental subclass of $\PO$ is the set of Poisson processes 
characterized by two parameters: a constant intensity, say $\alpha>0$, and the distribution $\mu \in \mathcal{P}(\Z)$ of the random initial value. We denote it by $\Poi_{\alpha}^\mu$. 
If  $\alpha=1$ we  drop the index and simply write   $\Poi^\mu$ or $\Poi$. The bridges $R _{ \alpha=1}^{ xy}$ are  denoted by $ \mathbf{R}^{ xy}.$
\end{example}

\begin{proposition}[Girsanov formula] \label{lem:girsanovunit}
Let $\Pl, P_k \in \PO$ be two NMC processes sharing the same initial distribution. Then they are absolutely continuous with respect to each other 
and 
\begin{equation}\label{eq:girsanovklunitjump}
\frac{d\Pl}{dP_k} = \exp\Big(-\int_{[0,1]} (\ell(s,X_{s^-})-k(s,X_{s^-})) ds \Big)
\prod_{i:T_i < 1} \frac{\ell(T_i,X_{T_i^-})}{k(T_i,X_{T_i^-})},
\end{equation}
In particular, the density of $\Pl$ with respect to  a standard Poisson process with the same initial distribution is \begin{equation}\label{eq:girsanovunitjump}
G_{\ell}:= \frac{d\Pl ^{ \mu}}{d\Poi^{ \mu}} = \exp\Big(-\int_{[0,1]} \bigg(\ell(s,X_{s^-})-1\bigg) ds \Big)\prod_{i:T_i<1} \ell(T_i,X_{T_i^-}).
\end{equation}
\end{proposition}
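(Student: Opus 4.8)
The plan is to reduce the identity to a fixed starting point and then to exhibit the law of an NMC process as an explicit density on the space of jump configurations. Since $\Pl$ and $P_k$ share the same initial law, say $\mu$, and both are Markov with $\Pl(\cdot\mid X_0=x)=R_\ell^x$ and $P_k(\cdot\mid X_0=x)=R_k^x$, disintegrating along $X_0$ reduces the claim to computing $dR_\ell^x/dR_k^x$ for each fixed $x\in\Z$. The crucial point is that the right-hand side of \eqref{eq:girsanovklunitjump} is a single path functional, depending on $\omega$ only through its own trajectory $X_{s^-}$ and its jump times $T_i$; hence if it coincides with $dR_\ell^x/dR_k^x$ on $\{X_0=x\}$ for every $x$, the common initial marginal $\mu$ cancels in the ratio and it is the density $d\Pl/dP_k$ on all of $\Omega$.

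First I would write down the law of $R_\ell^x$ explicitly. A path starting at $x$ with $n$ jumps is encoded by $0<t_1<\dots<t_n<1$, and $X_{s^-}=x+i$ on $(t_i,t_{i+1}]$, with the conventions $t_0=0$ and $t_{n+1}=1$. Constructing the process through its successive holding times, the probability of no jump during $(t_i,t_{i+1})$ while in state $x+i$ is $\exp(-\int_{t_i}^{t_{i+1}}\ell(s,x+i)\,ds)$, and the infinitesimal rate of the jump occurring at $t_{i+1}$ is $\ell(t_{i+1},x+i)$. Multiplying these contributions, the law of $(T_1,\dots,T_n)$ under $R_\ell^x$, restricted to $\{X_1-X_0=n\}$, has the density
\[
\Big(\prod_{i=1}^n \ell(t_i,x+i-1)\Big)\,\exp\Big(-\int_{[0,1]}\ell(s,X_{s^-})\,ds\Big)
\]
with respect to Lebesgue measure on the simplex, where I have used that $\int_{[0,1]}\ell(s,X_{s^-})\,ds=\sum_{i=0}^n\int_{t_i}^{t_{i+1}}\ell(s,x+i)\,ds$ and $\ell(t_i,x+i-1)=\ell(T_i,X_{T_i^-})$. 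The hypotheses on $\ell$ (continuous in time, bounded, strictly positive) ensure that this density is well defined and strictly positive, and that its total mass, summed over $n\ge 0$, equals one.

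Next I would take the ratio of this density with the analogous one for $k$. The Lebesgue reference measure on the simplex is common to both and cancels, and the resulting identity holds on each event $\{X_1-X_0=n\}$; summing over $n$ gives \eqref{eq:girsanovklunitjump} as an equality of measures on $\Omega$. Mutual absolute continuity is then immediate: every path has finitely many jumps, so the product is a finite product of strictly positive factors, whence the density is finite and strictly positive almost surely. Specializing to $k\equiv 1$, the product of the $k$-intensities equals one and the $k$-compensator reduces to $\int_{[0,1]}\,ds$, so the formula collapses to the expression for $G_\ell$ in \eqref{eq:girsanovunitjump}.

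The main obstacle is the rigorous justification of the explicit law in the second step, namely that the heuristic product of holding-time survival factors and jump rates is indeed the density of the jump configuration of the inhomogeneous pure-birth process determined by the compensator $\int_0^t\ell(s,X_{s^-})\,ds$. I would make this precise either by induction on the successive jump instants, using the Markov property to peel off one holding time at a time, or, more abstractly, by verifying that the right-hand side of \eqref{eq:girsanovklunitjump} is a strictly positive $P_k$-martingale of expectation one whose Girsanov action on the compensator turns the $k$-intensity into the $\ell$-intensity, as in the point-process theory underlying \cite{Jac75}; uniqueness of the law with prescribed intensity then identifies it with $\Pl$.
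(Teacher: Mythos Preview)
Your argument is correct, but it is considerably more explicit than what the paper does. The paper's proof is a one-line citation: boundedness of the NMC intensities places both processes within the scope of the general Girsanov theorem for point processes in \cite[Thm.~5.1]{Jac75}, and the formula is read off directly from there. By contrast, you reconstruct the density from first principles by writing down the joint law of the jump times as an explicit density on the simplex and taking the ratio. Your route is more self-contained and makes the structure of the formula transparent (one sees immediately why strict positivity and finitely many jumps give mutual absolute continuity), at the cost of having to justify the explicit law of the jump configuration --- which, as you note, is standard but not entirely trivial. The paper's route is shorter but offloads all the work to Jacod's theorem. Your final alternative (check that the right-hand side is a positive $P_k$-martingale of mean one and identify the changed compensator) is in fact exactly the content of the cited result, so the two approaches converge there.
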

Since $X _{ 1^-}=X_1, \Pl$-a.s., ``$i:T_i<1$'' indicates that only the effective jumps are taken into account.\\
Remark that  $G_{\ell}>0,$ $\Poi^\mu$-a.s.
\begin{proof}
It follows from  the boundedness of the intensity and \cite[Thm.\ 5.1]{Jac75}.
\end{proof}

\subsection*{Reciprocal classes of Markov counting processes} 
\label{sec:defclas} 

Let us define  the reciprocal class of  an NMC intensity. 
\begin{definition}\label{def-clasrec} 
Let $\ell $ be an NMC intensity. The  \emph{reciprocal class} $\Rec (\ell)$ associated to the intensity $\ell$  is the set of all probability measures on 
$\OO$ obtained as mixtures of the bridges $R_{\ell}^{xy}$, see \eqref{eq-kern}, that is 
 \begin{equation}\label{eq-12}
 \Rec (\ell):= \big\{Q \in \PO:   Q(\cdot)=\int_{\Z^2} R _{ \ell}^{xy}(\cdot) \,Q_{01}(dxdy)\big\}.
\end{equation}
\end{definition}

The integral  in \eqref{eq-12} makes sense since  $Q_{01}$  is concentrated on  $ \left\{x\le y\in\Z\right\}$.

Any  element $Q$ of $\Rec( \ell)$ possesses the following interesting time symmetry property, called {\it reciprocal  property}:
For any  $ 0\le s\le u\le 1$ and any  $X _{ [s,u]}$-measurable $A,$
\begin{equation*}
Q(A\mid X _{ [0,s]}, X _{ [u,1]})=Q(A\mid X_s,X_u).
\end{equation*}
Bernstein put forward this property  in \cite{Bern32}.
The reciprocal property is weaker than the Markov one. We will encounter in the rest of the paper path measures which are not Markov but are reciprocal.

A better understanding of the dynamics of the processes in  $\Rec (\ell)$ will follow from next result.
\begin{proposition}\label{prop:Q=hP} 
The path measure $Q \in \PO $ belongs to $\Rec(\ell)$ if and only if there exists a nonnegative measurable function $h:\Z^2  \rightarrow \R_+$ 
 such that
\begin{equation} \label{htransform}
Q = h(X_0,X_1) \, R _{ \ell}.
\end{equation}
Recall that $\Rl:=\sum _{ x\in\Z} \Rl ^x.$
\end{proposition}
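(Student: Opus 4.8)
The plan is to exploit the fact that the bridge kernel $\big(\Rl^{xy}\big)_{x\le y}$ is exactly the disintegration of the $\sigma$-finite measure $\Rl$ along its endpoint marginal. Writing $m:=\Rl\circ(X_0,X_1)^{-1}$ for this marginal, the defining property of a regular version of the bridge gives
\begin{equation*}
\Rl(\cdot)=\int_{\Z^2}\Rl^{xy}(\cdot)\,m(dxdy).
\end{equation*}
With this identity in hand, both the membership $Q\in\Rec(\ell)$ and the representation $Q=h(X_0,X_1)\,\Rl$ will be seen to be two ways of expressing that the Radon--Nikodym density of $Q$ with respect to $\Rl$ is a function of the endpoints alone.

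For the implication $Q=h(X_0,X_1)\,\Rl \Rightarrow Q\in\Rec(\ell)$, I would first compute the endpoint marginal of $Q$ by testing against functions of $(X_0,X_1)$: since $(X_0,X_1)=(x,y)$ holds $\Rl^{xy}$-a.s., integrating against $m$ yields $Q_{01}(dxdy)=h(x,y)\,m(dxdy)$. Then for a bounded measurable $F$ on $\OO$ I would disintegrate,
\begin{equation*}
\int_\OO F\,dQ=\int_{\Z^2}\Big(\int_\OO F\,h(X_0,X_1)\,d\Rl^{xy}\Big)m(dxdy)=\int_{\Z^2}h(x,y)\Big(\int_\OO F\,d\Rl^{xy}\Big)m(dxdy),
\end{equation*}
the second equality again using that $(X_0,X_1)$ is constant equal to $(x,y)$ under $\Rl^{xy}$. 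Replacing $h(x,y)\,m(dxdy)$ by $Q_{01}(dxdy)$ gives precisely $Q=\int_{\Z^2}\Rl^{xy}\,Q_{01}(dxdy)$, i.e.\ $Q\in\Rec(\ell)$.

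For the converse, starting from $Q=\int_{\Z^2}\Rl^{xy}\,Q_{01}(dxdy)$, I would set $h:=dQ_{01}/dm$ on $\{x\le y\}$ and $h:=0$ elsewhere. The same disintegration computation, read in reverse, shows that testing $h(X_0,X_1)\,\Rl$ and $Q$ against an arbitrary bounded $F$ produces the identical integral $\int_{\Z^2}\big(\int_\OO F\,d\Rl^{xy}\big)\,Q_{01}(dxdy)$, whence $Q=h(X_0,X_1)\,\Rl$.

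The one point requiring care, and the place where the hypotheses on $\ell$ enter, is the existence of $h=dQ_{01}/dm$, i.e.\ the absolute continuity $Q_{01}\ll m$. Here $\Rl$ is only $\sigma$-finite, but its endpoint marginal $m$ is carried by the countable set $\{x\le y\in\Z\}$ and, thanks to the strict positivity of the intensity, $m(\{(x,y)\})=\Rl^x(X_1=y)>0$ for every such pair, since starting from $x$ the process performs $y-x$ jumps in $\I$ with positive probability. As $Q_{01}$ is itself concentrated on $\{x\le y\}$ (the paths are non-decreasing), absolute continuity holds automatically on this discrete state space, so $h$ is well defined and finite $m$-a.e.; this is the main, if mild, obstacle.
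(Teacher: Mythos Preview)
Your argument is correct. The disintegration of the $\sigma$-finite measure $\Rl$ along its endpoint map, combined with the observation that $m=\Rl\circ(X_0,X_1)^{-1}$ charges every pair $(x,y)$ with $x\le y$ (so that $Q_{01}\ll m$ is automatic on the discrete support), is exactly what is needed; both implications then reduce to the same one-line computation you wrote.

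As for comparison: the paper does not supply its own proof here but simply refers the reader to \cite{LRZ12}. Your write-up is therefore a self-contained replacement for that citation. The only cosmetic point is that the positivity $\Rl^{x}(X_1=y)>0$ is already noted in the paper (the bridges are said to be ``defined everywhere, i.e.\ for all $x\le y\in\Z$'' and any two NMC laws are mutually absolutely continuous), so you could cite that rather than re-derive it.
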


\begin{proof} See \cite{LRZ12}.
\end{proof}

\begin{remark}
If the density $h=dQ/d\Rl$ is a function of the final state $X_1$ only, one identifies $Q$ as an $h$-transform of  $\Rl$ in the sense of Doob \cite{Doob57}.
\end{remark}

\begin{example} 
\label{ex:Poissonbridge}
 Let $\Poi$ be the standard Poisson process with unit intensity and  initial condition 0. Recall the notation of Example \ref{expl-Poi}.
\begin{enumerate}[(a)]
\item
Any Poisson process $\Poi_{\alpha}$ belongs to the reciprocal class $\Rec (1)$ associated with the  unit intensity $\ell=1$, whatever is its intensity $\alpha$.  Indeed, for each $x \in \Z$, the identity
$
\Poi_\alpha^x = e^{1-\alpha} \, \alpha^{X_1} \, \Poi^{x} $ implies that for any integer $x\le y,$ $\Poi_\alpha^{xy} = \mathbf{R}^{xy} .
$
\item 
 Let $\mu$ be any probability measure on $\N$. 
 The unique probability measure $Q$ in   $\Rec (1)$
 with  endpoint  distribution $Q_{01}=\delta_{\{0\}}\otimes \mu$ is given by 
  $$
  Q= e \, X_1!\, \mu(X_1) \, \Poi.
$$
In  next section, we will obtain  an alternate proof of this former assertion  using  reciprocal invariants.
\end{enumerate}
\end{example}

\subsection*{Reciprocal invariant}\label{sec:compnice} 

We show that the reciprocal class of an NMC intensity  is characterized by a function which is called its \emph{reciprocal invariant}. This new result extends previous studies  by Clark in  the framework of Brownian diffusions. It was proved  in \cite[Thm.\,1]{Cl91}  that reciprocal  classes  of Brownian diffusions are characterized by some  {reciprocal invariants}
that are functions of the drift. Rather than adapting Clark's proof, we  follow the  strategy of Zambrini and the second author \cite{LZ14} in the setting of diffusion processes.

 Let us  describe how the intensity of a counting process behaves through a change of measure involving  the final state only.
 
\begin{proposition}[$h$-transform] \label{lem:h-trafounit}
Let $\Pl$ be the law of an NMC process and $h:\Z\rightarrow (0, \infty)$ be any positive function such that $\El \,h(X_1) = 1$.
Then the process on $\Omega$ whose law is given by  
$$
Q: = h(X_1) \, \Pl
$$
is Markov and its intensity $k:[0,1] \times \Z \rightarrow [0, \infty)$ satisfies:
\begin{equation} \label{eq:h-intensity}
k(t,X_{t^-}) =  \frac{h(t,X_{t^-}+1)}{h(t,X_{t^-})} \, \ell(t,X_{t^-}) \quad dt\otimes Q\mbox{-}a.s.,
\end{equation}
where $h(t,z): = \El (h(X_1)|X_t = z)$.
\end{proposition}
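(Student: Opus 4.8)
The plan is to identify the Radon--Nikodym density process of $Q$ with respect to $\Pl$ and to read off both assertions from it. Since $dQ/d\Pl=h(X_1)$ depends on the terminal value only and $\Pl$ is Markov, the density of $Q$ restricted to the past $X_{[0,t]}$ is the $\Pl$-martingale
\begin{equation*}
M_t:=\El\big[h(X_1)\mid X_{[0,t]}\big]=\El\big[h(X_1)\mid X_t\big]=h(t,X_t),
\end{equation*}
where the middle equality is the Markov property of $\Pl$ and the last is the definition of $h(t,z)$. Because $h>0$ and the strict positivity of $\ell$ forces every forward transition $z\mapsto y\ge z$ to have positive probability, one has $h(t,z)=\sum_{y\ge z}h(y)\,\Pl(X_1=y\mid X_t=z)>0$; this keeps $M_t>0$ and makes the ratio in the claimed formula well defined. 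The normalisation $\El\,h(X_1)=1$ only serves to make $Q$ a probability measure.

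For the Markov property, fix $t$ and an $X_{[t,1]}$-measurable event $A$. Bayes' rule for conditional expectations under the density $h(X_1)$ gives
\begin{equation*}
Q\big(A\mid X_{[0,t]}\big)=\frac{\El\big[h(X_1)\1_A\mid X_{[0,t]}\big]}{\El\big[h(X_1)\mid X_{[0,t]}\big]}.
\end{equation*}
Since $h(X_1)$ and $\1_A$ are both $X_{[t,1]}$-measurable, the Markov property of $\Pl$ replaces each conditioning on $X_{[0,t]}$ by one on $X_t$; the numerator becomes $\El[h(X_1)\1_A\mid X_t]$ and the denominator becomes $h(t,X_t)$. Hence $Q(A\mid X_{[0,t]})$ is a function of $X_t$ alone and therefore equals $Q(A\mid X_t)$, which is exactly the Markov property of $Q$.

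It remains to compute the intensity, and the decisive point is the effect of the change of measure on the compensator. From $h(t,z)=\sum_{y\ge z}h(y)\,\Pl(X_1=y\mid X_t=z)$ and the smoothness in $t$ of the transition probabilities of a counting process with $\mC^1$ intensity, $t\mapsto h(t,z)$ is itself $\mC^1$; in particular it is continuous, so that $M_{t^-}=h(t,X_{t^-})$ and at a jump time $M_t/M_{t^-}=h(t,X_{t^-}+1)/h(t,X_{t^-})$. Recalling that $X_t-X_0-\int_0^t\ell(s,X_{s^-})\,ds$ is a local $\Pl$-martingale, Girsanov's theorem for point processes (Jacod \cite{Jac75}) multiplies the $\Pl$-intensity by precisely this jump ratio of the density process, which yields $k(t,X_{t^-})=\frac{h(t,X_{t^-}+1)}{h(t,X_{t^-})}\,\ell(t,X_{t^-})$, $dt\otimes Q$\ps Equivalently, and avoiding the general theorem, I would verify directly that $X_t-X_0-\int_0^t k(s,X_{s^-})\,ds$ is a local $Q$-martingale by checking, via an integration by parts on $M_t\big(X_t-X_0-\int_0^t k(s,X_{s^-})\,ds\big)$ together with the backward equation $\partial_t h(t,z)+(h(t,z+1)-h(t,z))\ell(t,z)=0$ (read off from the vanishing drift of the $\Pl$-martingale $M$), that this product is a local $\Pl$-martingale, and then invoke the criterion that $N$ is a local $Q$-martingale if and only if $MN$ is a local $\Pl$-martingale.

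The main obstacle is turning the heuristic Girsanov statement into a rigorous one: justifying the change-of-variables formula for $M_t=h(t,X_t)$, controlling integrability and supplying a localising sequence for the product $MN$ (here the boundedness of $\ell$ and the $\mC^1$, hence locally bounded, regularity of $h$ are what make this work), and securing enough regularity of $(t,z)\mapsto h(t,z)$ to differentiate in $t$. The Markov part, by contrast, is purely formal once the density process has been identified.
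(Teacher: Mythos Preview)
Your proposal is correct. The Markov part is handled cleanly via Bayes' rule, and the intensity computation via the jump ratio of the density martingale $M_t=h(t,X_t)$ is exactly the content of Jacod's Girsanov theorem for point processes; your alternative direct check that $M\big(X-\int k\big)$ is a local $\Pl$-martingale, driven by the backward equation for $h$, is equally valid.

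The paper takes a slightly different, more computational route. Rather than appealing to the abstract Girsanov statement for the change from $\Pl$ to $Q$, it passes through the standard Poisson process as an intermediary: it applies It\^o's formula to $\psi(t,X_t)=\log h(t,X_t)$, using the backward equation $\partial_t h+\ell(h(\cdot+1)-h)=0$, to write $h(X_1)$ explicitly as a Girsanov-type exponential in terms of $\ell$ and the jumps; multiplying this by the known density $G_\ell=d\Pl/d\Poi$ from \eqref{eq:girsanovunitjump} produces a closed form for $dQ/d\Poi$, from which one reads off the intensity $k$ by pattern-matching with \eqref{eq:girsanovunitjump}. Your approach is shorter and conceptually cleaner (it works directly between $\Pl$ and $Q$ and isolates precisely the structural fact that the intensity is multiplied by the jump ratio of the density), while the paper's computation is more self-contained and yields the explicit exponential representation of $h(X_1)$ as a by-product. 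Your candid remarks about the regularity and localisation needed to make the Girsanov/It\^o steps rigorous apply equally to the paper's argument, which simply asserts the time-differentiability of $h(t,z)$ by ``a standard semigroup argument'' and proceeds formally.
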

\begin{proof}
This is a special easy case of a Doob $h$-transform, \cite{Doob57}.
First note that $h(t,z)$ is time-differentiable (a standard semigroup argument) and space-time harmonic, that is 
\begin{equation} \label{eq:harmonic}
\partial_t h(t,z) + \ell(t,z)\left[h(t,z+1) - h(t,z) \right] = 0, \quad \Pl \circ X_t^{-1} \mbox{-a.e.},\mbox{ for all }t\in \I.
\end{equation}
Denote  $\psi (t,z):= \log h(t,z)$ which is $\Pl \circ X_t^{-1}$-a.e.\ well defined. It satisfies
$$
\partial_t \psi(t,z) = \ell (t,z) \big(1 - e^{\psi(t,z+1) - \psi(t,z)} \big) .
$$
By It\^o formula, for any $0\le t\le1,$ 
\begin{eqnarray*}
\psi(t,X_t) &=  &  \int_{]0,t]}\partial_t \psi(s,X_{s^-})\, dr + \sum_{i:T_i \le t }\Big(\psi(T_i,X_{T_i}) - \psi(T_i,X_{T_i^-})\Big) \\
& = & \int_{]0,t]}\left(1 - \frac{h(s,X_{s^-}+1)}{h(s,X_{s^-})} \right)\ell(s,X_{s^-})ds  +
\sum_{i:T_i \le t }\log \frac{h(T_i,X_{T_i^-}+1)}{h(T_i,X_{T_i^-})}.
\end{eqnarray*}
Since $ h(X_1)= e^{\psi(1,X_1)}$, this implies that
$$
h(X_1) = 
\exp\left(-\int_{(0,1]}\left(\frac{h(s,X_{s^-}+1)}{h(s, X_{s^-})}-1\right)\ell(s,X_{s^-})ds \right)\prod_{i:T_i<1} \frac{h(T_i,X_{T_i^-}+1)}{h(T_i,X_{T_i^-})}.
$$
 and
\begin{eqnarray*}
h(X_1)G_{\ell} = \exp\left(-\int_{(0,1]}\left(\ell(s,X_{s^-})\frac{h(s,X_{s^-}+1)}{h(s,X_{s^-})}-1\right)ds \right)
\prod_{i:T_i<1} \ell(T_i,X_{T_i^-})\frac{h(T_i,X_{T_i}+1)}{h(T_i,X_{T_i^-})}.
\end{eqnarray*}
With \eqref{eq:girsanovunitjump} we see that $Q$ admits the intensity $k$ as defined in \eqref{eq:h-intensity}.
\end{proof}

\begin{example} 
Take $x\le y\in \Z$. The bridge $\Poi^{xy}_\alpha$ of the Poisson process with intensity $\alpha$ starting from $x $ is given by 
$$
\Poi^{xy}_\alpha = \frac{\1_{\{X_1=y\}}}{\Poi^x_\alpha(X_1 = y)} \, \Poi^x_\alpha = e^\alpha \frac{(y-x)!}{\alpha^{y-x}} \, \1_{\{X_1=y\}}  \, \Poi^x.
$$
Since 
\begin{eqnarray*} 
\Poi^x_\alpha(X_1=y \mid X_t = z) = e^{-\alpha(1-t)} \frac{(\alpha(1-t))^{y-z}}{(y-z)!},
\end{eqnarray*}
this implies that the intensity $k$ of the bridge $\Poi^{xy}_\alpha$ is given for $t \in [0,1)$ by
$$
k(t,z) =  \frac{y-z}{1-t} \, \1_{\{y>z\}},
$$
whose interpretation is as follows. If the process is located at $z$ at time $t$, it will undergo a kind of mean velocity  $(y-z)/(1-t)$ to reach 
the state $y$ during the remaining duration $1-t$. Note the  similarity with the drift of the Brownian bridge. The intensity explodes at time $1$ and it does not depend on $\alpha$, as already mentioned in Example \ref{ex:Poissonbridge}-(a).
\end{example}

We give at Theorem \ref{thm:clarkpoisson} below a necessary and sufficient condition for the equality of the reciprocal classes associated with  
two distinct NMC intensities.
 
\begin{definition}[Reciprocal invariant]
For any NMC intensity  $\ell$, we define the map
\begin{equation} \label{eq:invariantunitjump}
\Xi_{\ell}(t,z) := \partial_t \log \ell(t,z) + \ell(t,z+1)-\ell(t,z)
\end{equation}
and  call it the  \emph{reciprocal invariant} of the class $\Rec({\ell})$.
\end{definition}
This terminology is justified by the  following result.

\begin{theorem}\label{thm:clarkpoisson}
Let $\ell$ and $k$ be two  NMC intensities.
Then, 
$
\Rec (\ell) = \Rec({k}) $ if and only if \ $ \Xi_{\ell} = \Xi_{k}.
$
\end{theorem}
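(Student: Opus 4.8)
The plan is to reduce the equality of the two reciprocal classes to the equality of all their bridges, and then to extract the condition $\Xi_\ell=\Xi_k$ by differentiating a Girsanov density with respect to the instants of jump.

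First I would show that $\Rec(\ell)=\Rec(k)$ if and only if $R_\ell^{xy}=R_k^{xy}$ for every pair $x\le y$ in $\Z$. The implication from equal bridges to equal classes is immediate from the defining mixture formula \eqref{eq-12}. Conversely, $R_\ell^{xy}$ lies in $\Rec(\ell)$ (choose $Q_{01}=\delta_{(x,y)}$) and is carried by the paths joining $x$ to $y$; since every element of $\Rec(k)$ is a mixture of the $R_k^{x'y'}$ indexed by their own endpoints, its endpoint marginal is exactly the mixing measure, so the unique member of $\Rec(k)$ with endpoint marginal $\delta_{(x,y)}$ is $R_k^{xy}$, and therefore $R_\ell^{xy}=R_k^{xy}$.

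Next I would fix $x\in\Z$ and an integer $n\ge 0$, put $y=x+n$, and view both bridges as probability measures on the open simplex of jump times $\{0<t_1<\dots<t_n<1\}$. Applying Proposition \ref{lem:girsanovunit} with common initial law $\delta_x$ gives the density $dR_\ell^x/dR_k^x$ explicitly, and conditioning the identity \eqref{eq:girsanovklunitjump} on $\{X_1=y\}$ shows that $R_\ell^{xy}=R_k^{xy}$ holds precisely when this density is constant on $\{X_1=y\}$, i.e.\ depends on the path only through its endpoints. Writing its logarithm as a function $D(t_1,\dots,t_n)$ of the jump times, and using $X_{T_j^-}=x+j-1$ together with the piecewise constant description of $X_{s^-}$, the $\mC^1$-in-time hypothesis on $\ell$ and $k$ permits differentiation in each $t_j$; the boundary terms produced in the compensator integral combine with the $\partial_t\log$ arising from the product term to yield exactly
\begin{equation*}
\partial_{t_j}D(t_1,\dots,t_n)=\Xi_\ell(t_j,x+j-1)-\Xi_k(t_j,x+j-1).
\end{equation*}

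Finally, because the simplex is connected, $D$ is constant on it if and only if all these partial derivatives vanish, that is if and only if $\Xi_\ell$ and $\Xi_k$ coincide at every point $(t_j,x+j-1)$. Letting $x$ range over $\Z$, $n$ over the positive integers and each $t_j$ over $(0,1)$, these points fill $(0,1)\times\Z$, so the whole collection of bridge identities is equivalent to $\Xi_\ell=\Xi_k$ on $\I\times\Z$; combined with the first step this gives the stated equivalence. I expect the delicate point to be the differentiation of $D$: one must track carefully how moving a single $t_j$ simultaneously shortens one interval of constancy of $X_{s^-}$ and lengthens the adjacent one, so that the two boundary contributions $\ell(t_j,x+j)-\ell(t_j,x+j-1)$ assemble with $\partial_t\log\ell(t_j,x+j-1)$ into the reciprocal invariant $\Xi_\ell(t_j,x+j-1)$.
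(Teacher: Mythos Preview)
Your proposal is correct and takes a genuinely different route from the paper. The paper proves the forward implication through the $h$-transform: since $P_k^x=h(X_1)P_\ell^x$ for some positive $h$ (Proposition~\ref{prop:Q=hP}), Proposition~\ref{lem:h-trafounit} gives $k=e^{\psi(\cdot,\cdot+1)-\psi}\,\ell$ with $\psi(t,z)=\log E_\ell(h(X_1)\mid X_t=z)$ space--time harmonic, and differentiating this relation in $t$ yields $\Xi_k=\Xi_\ell$. For the converse the paper integrates $\Xi_\ell=\Xi_k$ to obtain $\log(k/\ell)(t,z)=\phi(t,z+1)-\phi(t,z)+c(z)$ with $\phi(t,z)=\int_0^t(\ell-k)(s,z)\,ds$, and then collapses the Girsanov density by an It\^o-type telescoping into a function of $(X_0,X_1)$ alone. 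You instead write $D=\log(dR_\ell^x/dR_k^x)$ explicitly on each $n$-simplex of jump times and differentiate in each $t_j$, obtaining $\partial_{t_j}D=\Xi_\ell(t_j,x+j-1)-\Xi_k(t_j,x+j-1)$; constancy of $D$ on the connected simplex is then equivalent to the vanishing of these differences, and varying $x$, $n$, $j$ and the $t_j$'s covers all of $(0,1)\times\Z$. The paper's route supplies the structural relation \eqref{eq:h-intensity} between intensities sharing a reciprocal class; your route is shorter, treats both directions symmetrically, and is already in the spirit of the derivative operator $\mathcal{D}_u$ used later (compare \eqref{eq-016}).
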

\begin{proof}
Note that for any $x\in \Z$, the measures $P_k^x$ and $P_\ell^x$ are  absolutely continuous with respect to each other. 
Assume that $P_k \in \Rec({\ell})$ and fix $x\in \Z$. By Proposition \ref{prop:Q=hP},
there exists a positive function $h$ such that $P^x_{k} = h(X_1)P_{\ell}^x$.
Therefore $k$ and $\ell$  are related by (\ref{eq:h-intensity}):
$$
k(t,z)=e^{\psi(t,z+1) - \psi(t,z)} \ell(t,z),
$$
where $\psi (t,z):= \log h(t,z)$.
This leads to
\begin{eqnarray*}
 \partial_t (\log k(t,z) - \log \ell(t,z)) &=&   \partial_t (\psi(t,z+1) - \psi(t,z))\\
 &=& \ell (t,z+1) \Big(1 - e^{\psi(t,z+2) - \psi(t,z+1)} \Big) - \ell (t,z) \Big(1 - e^{\psi(t,z+1) - \psi(t,z)} \Big)\\
 &=&  \Big(\ell (t,z+1)- \ell (t,z) \Big)- \Big(k(t,z+1)- k (t,z)\Big)
\end{eqnarray*}
which implies the equality of  $\Xi_k$ and $\Xi_{\ell}$.\\
Let us prove the converse statement. 
Suppose that $\Xi_k = \Xi_{\ell}$. This implies that there exists a regular function $c$ which is only space dependent such that 
$$
\log \frac{k}{\ell}(t,z) = \phi(t,z+1) -\phi(t,z) + c(z)  \textrm{ where } \phi(t,z):= \int_0^t [\ell(s,z) - k (s,z)]\, ds.
$$
By Equation \eqref{eq:girsanovklunitjump}, for any $x\in \Z$, we have
\begin{equation*}
\frac{dP_k^x}{dP_\ell^x} = 
\exp\Big(\int_{(0,1]} [ \ell(t,X_{t^-})-k(t,X_{t^-})] \ dt \Big)\prod_{i:T_i<1} \frac{k}{\ell}(T_i,X_{T_i^-}),
\end{equation*}
where only the random instants $T_i< 1$ of  jumps are taken into account in the last product.
Therefore,
\begin{eqnarray*}
 \frac{dP_k^x}{dP_\ell^x} &=&  \exp\Big(\int_{(0,1]} \partial_t \phi(t,X_{t^-}) \ dt + \sum_{i:T_i<1} \log \frac{k}{\ell}(T_i,X_{T_i^-})\Big)\\
    &=&  \exp\Big(\int_{(0,1]} \partial_t \phi(t,X_{t^-}) \ dt + \sum_{i:T_i<1} [\phi(T_i,X_{T_i})- \phi(T_i,X_{T_i^-})]  + \sum_{i:T_i<1} c(X_{T_i^-})
\Big)\\
 &=&  \exp \big(\phi(1,X_1) - \phi(0,x)+C(x,X_1) \big) 
\end{eqnarray*}
where $\sum_{i:T_i<1} c(X_{T_i^-} )=c(x)+\cdots+c(X_1-1)=:C(x,X_1)$ only depends on $x$ and $X_1.$
Thus, thanks to Proposition \ref{prop:Q=hP} we deduce that for any $x\in \Z$, $P^x_k \in \Rec (\ell)$ and it follows that $P_k \in
\Rec(\ell)$.
\end{proof}

\begin{remark}\label{rem-rule out}
Defining the reciprocal invariant $\Xi_\ell$ requires  positivity of the intensity $\ell$. 
In particular, any bridge of an NMC process is ruled out since its intensity vanishes at the terminal state. 
This is in some sense a weakness of Definition \ref{def-clasrec} in the present framework of jump processes. It is in contrast with the  diffusion process
setting where reciprocal invariants of bridges are always well defined, see \cite{RT02,RT05}. 
However, we will present in  Theorem  \ref{thm:derivcharacrec}  
a significant improvement of Theorem \ref{thm:clarkpoisson}.
\end{remark}

\begin{example} 
\begin{enumerate}[(a)]
\item
It is immediate to see that for any constant intensities $ \alpha, \beta>0,$ we have $\Rec( \alpha)=\Rec (\beta)$, meaning that any Poisson process with a constant intensity has the same bridges as the standard Poisson process. This was the content of  Example \ref{ex:Poissonbridge}-(a).
 \item 
 Consider two time-homogeneous intensities $\ell(x)$ and $k(x)$. 
 Their associated reciprocal invariants are also time-homogeneous: $\Xi_{\ell}(t,x) = \ell(x+1) - \ell(x)$. 
Then 
$$
\Rec({\ell}) = \Rec(k) \, \Leftrightarrow \,  \forall x \in \Z, \, \ell(x+1) - \ell(x) = k(x+1)-k(x), \;
$$
which means that $\ell - k$ is a 1-periodic function on $\Z$. This implies that the function $k$ is equal to $\ell$ up to some
additive  constant $ \lambda$: $k(x)\equiv \ell(x)+ \lambda$.\\
Therefore, if $ \lambda>0,$ this means that $P_k$ is the law of the sum (or superposition) 
of  $P _{ \ell}$ with an independent Poisson process with intensity $\lambda$.
\item
 Consider  two space-homogeneous deterministic intensities $\ell(t)$ and $k(t)$.
Their associated reciprocal invariants are  space-homogeneous too: $\Xi_{\ell}(t,x) = (\log \ell)'(t)$.
By Theorem \ref{thm:clarkpoisson},  $\Rec(\ell)=\Rec(k)$ if and only if   $k(t)\equiv c \, \ell(t)$ for some  constant $ c>0$.\\
Therefore, if $0< c\le 1,$ $P_k$ is obtained by $c$-thinning  $P_\ell$ as follows. 
Let $X_0 + \sum _{ i} \delta _{ T_i}$ be the point process with law $P_\ell$. 
Then  $P_k$ is the law of  $X_0 + \sum _i \xi_i \delta _{ T_i}$ where $(
\xi_i)_i$ is an iid sequence of Bernoulli($c$) random variables that is independent of $(T_i)_i$. \\
For a  general $c$, decompose $ c=n+r$ with $n\in\N$ its integer part and $0\le r<1$.
Superposing $n$ independent copies of  $P _{ \ell}$ and a $c$-thinning of it gives the process $P_k$.
 \end{enumerate}
\end{example}

\section{Characterization of reciprocal classes by duality formulas} \label{sec:derivativejump}

The main result of the article is Theorem \ref{thm:derivcharacrec} below. We show that each reciprocal class  coincides with the set of random processes 
for which a duality relation holds between some stochastic integral and some derivative operator on the path space. 
This is in the spirit of the results obtained by Thieullen and the fourth author for Brownian diffusion processes. Indeed, it is shown in \cite{RT02} for one-dimensional case and in  \cite{RT05} for the multidimensional case that  any  Brownian diffusion process satisfies  an  integration by parts formula expressed in terms of its reciprocal invariants
that  fully characterizes its reciprocal class.

We introduce a derivative operator by perturbing the time parametrization of the path and prove a duality formula that holds for any counting  process. 
 Then,  we extend these results to bridges of NMC processes and their mixtures, 
 to obtain finally a characterization of the reciprocal class of an NMC intensity at Theorem \ref{thm:derivcharacrec}.

\subsection*{Directional derivative on $\Omega$} 
\label{sec:pertur}

In the framework of Malliavin calculus, Carlen and Pardoux \cite{CP88} introduced 
a directional derivative on the Poisson space by considering infinitesimal changes of the time parametrization (see also Elliott \& Tsoi \cite{ET93}). We follow a similar 
approach in the context of general counting processes.

\subsubsection*{The perturbation operator}

The perturbation operator is defined in terms of a change of time. 

\begin{definition}[The set $\UU$ of perturbation functions]
The set $\UU$ of perturbation functions consists of all ${\mathcal C}^1$-functions  $u:[0,1] \rightarrow \R$ such that
$ u(0)=u(1)=0$. 
\end{definition}

For any  function $u \in \UU$ and $\e > 0$ small enough, we define the  change of time
$\theta^{\e}_u: [0,1]\rightarrow [0,1]$ by 
$$
\theta^{\e}_u(t)= t + \e \, u(t).
$$
The boundedness of the derivative  $\uu$ of $u$ and the  property  $u(0)=u(1)=0$ ensure  that for any $\e$ small enough, $\theta^{\e}_u$ is indeed a change of time with
$\theta^{\e}_u(0)=0$ and $\theta^{\e}_u(1)=1$. \\
\\
The perturbation operator is defined for any path $ \omega\in\Omega$ by
\begin{equation} \label{def:perturbation}
 \Theta^{\e}_u(\omega)= \omega \circ \theta^{\e}_u.
\end{equation}
Note that the  operator $\Theta_u^\e$ keeps the initial and final values of the path unchanged.

\subsubsection*{The derivative operator}

We are now ready to give the definition of a derivative in the direction of the elements of $\UU$, 
in the spirit of the stochastic calculus of variations.

\begin{definition}[The derivative $\DD_u\Phi$]
Let $\Phi $ be a measurable  real function on $\OO$  and $u\in\UU$ a perturbation function. 
We define 
\begin{equation}\label{eq:unitjder}
{\mathcal{D}}_{u} \Phi    := \lim_{\e \rightarrow 0}\frac{1}{\e}\left(\Phi\circ \Theta^{\e}_u - \Phi\right),
\end{equation}
provided that this limit exists.
\end{definition}
Let us remark that we slightly changed  the notations introduced by Carlen and Pardoux in the context of Malliavin calculus:
 we write $\DD_u$ instead of $\DD_{\uu}$ in \cite{CP88}.

\begin{definition}[The set $ \mathcal{S}$ of simple functions]
We say that $\Phi:\OO\to\R$ belongs to the set $ \mathcal{S}$ of simple functions if there exists  $m\ge1$ such that 
$
\Phi =\varphi\big(X_0;T_1,\dots, T_m\big) 
$
for some $\varphi:\Z\times [0,1]^m\to\R$ such that for all $x\in\Z$, the partial functions $\varphi(x;\cdot)$ are  ${\mathcal
C} ^{ \infty}$-differentiable.
\end{definition}
 
 These functions are differentiable on the Poisson space in a natural way, as  proved in  \cite[Thm.\,1.3]{CP88}. 
 
 \begin{lemma}\label{lem:Derivative_Poisson}
 Let  $\Phi \in  \mathcal{S}$ be a simple function.
 It is differentiable in the direction  of any $u\in\UU$ and one has 
\begin{eqnarray}\label{eq-015}
\DD_u\Phi &=& \DD_u \varphi\big(X_0;T_1,\dots, T_m\big) 
	= - \sum_{j=1}^m \partial_{t_j} \varphi(X_0;T_1,\dots, T_m) u(T_j) \\
	&=& -  \int_{[0, 1]} \Big( \sum_{j=1}^m \partial_{t_j} \varphi(X_0;T_1,\dots, T_m)\1_{[0,T_j]}(t)\Big) \uu(t) dt. \nonumber
\end{eqnarray}
 \end{lemma}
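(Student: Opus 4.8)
The plan is to compute the derivative directly from its definition \eqref{eq:unitjder}, exploiting the structure of the simple function $\Phi=\varphi(X_0;T_1,\dots,T_m)$ and the explicit form of the time change $\theta^\e_u(t)=t+\e u(t)$. The first step is to understand how the perturbation operator $\Theta^\e_u$ acts on the instants of jump. Since $\Theta^\e_u(\omega)=\omega\circ\theta^\e_u$, a jump of $\Theta^\e_u(\omega)$ occurs at time $s$ precisely when $\theta^\e_u(s)$ equals an original jump time $T_j(\omega)$, i.e.\ when $s+\e u(s)=T_j$. For $\e$ small enough, $\theta^\e_u$ is an increasing bijection of $[0,1]$, so each original jump at $T_j$ is moved to the unique $s_j$ solving $\theta^\e_u(s_j)=T_j$, namely $s_j=(\theta^\e_u)^{-1}(T_j)$. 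Thus the key observation is the identity
\begin{equation*}
T_j\big(\Theta^\e_u(\omega)\big)=(\theta^\e_u)^{-1}\big(T_j(\omega)\big),
\end{equation*}
while $X_0$ is left unchanged because $\theta^\e_u(0)=0$. This reduces everything to the behaviour of the inverse time change.

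The second step is to differentiate $(\theta^\e_u)^{-1}(T_j)$ in $\e$ at $\e=0$. Writing $s_j=s_j(\e)$ for the solution of $s_j+\e u(s_j)=T_j$, implicit differentiation gives $\frac{d}{d\e}s_j(\e)\big|_{\e=0}=-u(T_j)$, since $s_j(0)=T_j$. Hence the perturbed jump times satisfy $T_j(\Theta^\e_u(\omega))=T_j-\e\,u(T_j)+o(\e)$ uniformly in $j$. Substituting this first-order expansion into $\varphi$ and using the assumed $\mathcal C^\infty$-differentiability of the partial maps $\varphi(x;\cdot)$, a Taylor expansion yields
\begin{equation*}
\Phi\circ\Theta^\e_u=\varphi\big(X_0;T_1-\e u(T_1)+o(\e),\dots,T_m-\e u(T_m)+o(\e)\big)
=\Phi-\e\sum_{j=1}^m\partial_{t_j}\varphi(X_0;T_1,\dots,T_m)\,u(T_j)+o(\e).
\end{equation*}
Dividing by $\e$ and letting $\e\to 0$ produces exactly the first line of \eqref{eq-015}. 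The final line is then a purely algebraic rewriting: using $u(0)=0$ one writes $u(T_j)=\int_{[0,1]}\1_{[0,T_j]}(t)\,\uu(t)\,dt$, and interchanging the finite sum with the integral gives the integral representation.

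The main obstacle is making the limit argument rigorous rather than merely formal, since the jump times $T_j$ are random and the expansion must be controlled well enough to pass the limit through for every $\omega$ (or at least almost every $\omega$). The delicate points are: ensuring $\e$ is small enough that $\theta^\e_u$ remains a genuine change of time for the relevant configuration, which is guaranteed by the boundedness of $\uu$ and $u(0)=u(1)=0$ as noted after the definition of $\theta^\e_u$; and controlling the $o(\e)$ remainder uniformly in $j$, which follows from the $\mathcal C^1$-regularity of $u$ and the smoothness of $\varphi(x;\cdot)$. Since for fixed $\omega$ there are only finitely many jump times and $\varphi$ is smooth in its time arguments, these are genuinely routine once the chain-rule computation above is set up; the conceptual heart of the proof is the identity relating the perturbed jump times to the inverse time change and the resulting derivative $-u(T_j)$. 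I would rely on \cite[Thm.\,1.3]{CP88} to legitimise the pathwise differentiation, this being the precise statement the authors invoke for the Poisson space.
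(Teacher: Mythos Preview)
Your argument is correct and is precisely the computation underlying the result: identifying $T_j\circ\Theta^\e_u=(\theta^\e_u)^{-1}(T_j)$, differentiating implicitly to get $-u(T_j)$, Taylor-expanding $\varphi$, and rewriting $u(T_j)=\int_{[0,1]}\1_{[0,T_j]}\uu\,dt$ via $u(0)=0$. The paper itself gives no proof at all for this lemma---it simply invokes \cite[Thm.\,1.3]{CP88}---so your proposal is not an alternative route but rather the natural detailed justification of what the paper leaves to that reference. One minor point worth making explicit is the case $T_j=1$ (fewer than $j$ effective jumps): then $(\theta^\e_u)^{-1}(1)=1$ so $T_j$ is fixed by the perturbation, and since $u(1)=0$ the corresponding summand vanishes, consistent with your formula.
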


\subsection*{Duality formula. Markov counting process}
\label{sec:dualities}

Proposition \ref{thm:IPFPoi+Poi} below states two  duality relations between the derivative operator $\DD$ and some stochastic integrals. 

\begin{proposition}\label{thm:IPFPoi+Poi} 
 The NMC process $\Pl$ satisfies
 the following duality formulas. For all $\Phi\in \mathcal{S}$  and $ u\in \mathcal{U},$
 \begin{eqnarray}  
\El (\mathcal{D}_{u} \Phi)&=&\El \Big( \Phi \int_{[0,1]} \frac{\partial_t(\ell u)}{\ell}\big[dX_t - \ell(t,X_{t^-}) \, dt\big] \Big) 
 \label{eq:IBPFPoisson}\\
\textrm { and } \quad \El \big( \mathcal{D}_{u} \Phi\big)&=&
\El\Big(\Phi\int _{ [0,1]} \big[ \uu(t)+ \Xi_{\ell}(t,X_{t^-})u(t)\big] \,dX_t\Big).
\label{eq:IBPFPoisson_2}
\end{eqnarray}
We do not make the initial distribution $\mu$ precise since it does not play any role.
\end{proposition}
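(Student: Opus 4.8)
The plan is to transport the duality formula from the standard Poisson process $\Poi$ to $\Pl$ by means of the Girsanov density $G_{\ell}=d\Pl/d\Poi$ of Proposition~\ref{lem:girsanovunit}. Two structural facts drive the argument: the operator $\DD_u$ obeys the Leibniz rule, $\DD_u(FG)=(\DD_u F)\,G+F\,\DD_u G$, because it is the pathwise directional derivative of $F\mapsto F\circ\Theta^{\e}_u$ and composition with $\Theta^{\e}_u$ is multiplicative on products; and $\Theta^{\e}_u$ preserves the number of jumps, so $\DD_u$ respects the decomposition of $\OO$ according to the number of jumps. Writing $\El(\DD_u\Phi)=\EE(G_{\ell}\,\DD_u\Phi)=\EE\big(\DD_u(G_{\ell}\Phi)\big)-\EE(\Phi\,\DD_u G_{\ell})$ then reduces the proof to two ingredients: a duality formula for $\Poi$ itself, and the explicit computation of $\DD_u\log G_{\ell}$.

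First I would prove the base case $\ell\equiv 1$, namely $\EE(\DD_u\Phi)=\EE\big(\Phi\int_{[0,1]}\uu\,dX\big)$ for $\Phi\in\mathcal{S}$. Conditionally on having exactly $n$ jumps, the law of $(T_1,\dots,T_n)$ under $\Poi$ has constant density on the simplex $\{0<t_1<\dots<t_n<1\}$; combining this with the expression $\DD_u\Phi=-\sum_j\partial_{t_j}\varphi\,u(T_j)$ from Lemma~\ref{lem:Derivative_Poisson}, I would integrate by parts in each $t_j$ over $(t_{j-1},t_{j+1})$. The boundary term created at $t_j=t_{j+1}$ by the $j$-th integration cancels the one created at $t_{j+1}=t_j$ by the $(j{+}1)$-th, since on the diagonal $t_j=t_{j+1}$ the weights $u(t_j)$ and $u(t_{j+1})$ agree, while the two extreme boundary terms vanish because $u(0)=u(1)=0$; what remains is $\EE\big(\Phi\sum_j\uu(T_j)\big)=\EE\big(\Phi\int_{[0,1]}\uu\,dX\big)$. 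This is the Carlen--Pardoux duality and is the common $\ell=1$ instance of \eqref{eq:IBPFPoisson} and \eqref{eq:IBPFPoisson_2}, which coincide there because $\Xi_{1}\equiv 0$ and $\int_0^1\uu\,dt=0$.

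Applying this base case to $G_{\ell}\Phi$ — restricted to each event $\{X_1-X_0=n\}$, on which $G_{\ell}$ is a smooth function of the jump times, and then summed, the boundedness $\ell\le\bar\lambda$ ensuring integrability — gives $\EE\big(\DD_u(G_{\ell}\Phi)\big)=\El\big(\Phi\int_{[0,1]}\uu\,dX\big)$. It then remains to compute $\DD_u\log G_{\ell}$. From $\log G_{\ell}=-\int_{[0,1]}(\ell(s,X_{s^-})-1)\,ds+\sum_{i:T_i<1}\log\ell(T_i,X_{T_i^-})$, differentiating with respect to $T_j$ produces $\ell(T_j,X_{T_j^-}+1)-\ell(T_j,X_{T_j^-})$ from the integral term and $\partial_t\log\ell(T_j,X_{T_j^-})$ from the product term, whose sum is precisely $\Xi_{\ell}(T_j,X_{T_j^-})$ by Definition~\eqref{eq:invariantunitjump}. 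Hence, by Lemma~\ref{lem:Derivative_Poisson}, $\DD_u\log G_{\ell}=-\sum_j\Xi_{\ell}(T_j,X_{T_j^-})u(T_j)=-\int_{[0,1]}\Xi_{\ell}(t,X_{t^-})u(t)\,dX_t$. Since $\DD_u G_{\ell}=G_{\ell}\,\DD_u\log G_{\ell}$ and $\El(\cdot)=\EE(G_{\ell}\,\cdot)$, we get $\EE(\Phi\,\DD_u G_{\ell})=\El(\Phi\,\DD_u\log G_{\ell})$, and substitution yields $\El(\DD_u\Phi)=\El\big(\Phi\int_{[0,1]}[\uu+\Xi_{\ell}u]\,dX_t\big)$, which is \eqref{eq:IBPFPoisson_2}.

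Finally I would deduce \eqref{eq:IBPFPoisson} from \eqref{eq:IBPFPoisson_2} by a pathwise integration by parts in time. Since $\Xi_{\ell}(t,z)=\partial_t\log\ell(t,z)+\ell(t,z+1)-\ell(t,z)$ and $\frac{\partial_t(\ell u)}{\ell}(t,z)=\uu(t)+\partial_t\log\ell(t,z)\,u(t)$, the integrand of \eqref{eq:IBPFPoisson_2} decomposes, at $(t,X_{t^-})$, as $\uu+\Xi_{\ell}u=\frac{\partial_t(\ell u)}{\ell}+\big(\ell(t,X_{t^-}+1)-\ell(t,X_{t^-})\big)u(t)$. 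Applying the fundamental theorem of calculus to the left-continuous, piecewise-$\mathcal{C}^1$ map $t\mapsto \ell(t,X_{t^-})u(t)$ — whose jump at each $T_j$ equals $\big(\ell(T_j,X_{T_j^-}+1)-\ell(T_j,X_{T_j^-})\big)u(T_j)$ and whose values at $0^+$ and $1$ vanish because $u(0)=u(1)=0$ — yields, almost surely, $\int_{[0,1]}\big(\ell(t,X_{t^-}+1)-\ell(t,X_{t^-})\big)u(t)\,dX_t=-\int_{[0,1]}\partial_t(\ell u)(t,X_{t^-})\,dt$. Inserting this identity turns $\int_{[0,1]}[\uu+\Xi_{\ell}u]\,dX_t$ into $\int_{[0,1]}\frac{\partial_t(\ell u)}{\ell}\big[dX_t-\ell(t,X_{t^-})\,dt\big]$, which is \eqref{eq:IBPFPoisson}. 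I expect the main obstacles to be the careful bookkeeping of the boundary terms in the base case and the justification that the Poisson duality extends to the non-simple functional $G_{\ell}\Phi$; once these are secured, the emergence of $\Xi_{\ell}$ out of $\DD_u\log G_{\ell}$ is the clean conceptual step that ties the computation together.
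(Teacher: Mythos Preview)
Your proof is correct, and your derivation of \eqref{eq:IBPFPoisson_2} from the Poisson base case via the Leibniz rule and the computation $\DD_u\log G_\ell=-\int\Xi_\ell u\,dX$ coincides exactly with the paper's argument for that formula. The genuine differences lie elsewhere. First, the paper proves \eqref{eq:IBPFPoisson} \emph{directly} for general $\ell$ by computing the image measure: it identifies $\Pl\circ(\Theta^\e_u)^{-1}$ as a counting process with intensity $\ell(\theta^\e_u,\cdot)(1+\e\uu)$, expands the Girsanov density $G^{\e,u}=d\Pl\circ(\Theta^\e_u)^{-1}/d\Pl$ to first order in $\e$, and reads off the integrand $\frac{\partial_t(\ell u)}{\ell}$; the Poisson duality \eqref{eq:FIP-Poisson} then appears as the special case $\ell\equiv1$. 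You instead obtain the Poisson duality by hand via integration by parts on the ordered simplex, and recover \eqref{eq:IBPFPoisson} \emph{after} \eqref{eq:IBPFPoisson_2} through the pathwise identity $\sum_j\big(\ell(T_j,X_{T_j^-}{+}1)-\ell(T_j,X_{T_j^-})\big)u(T_j)=-\int_0^1\partial_t(\ell u)(t,X_{t^-})\,dt$, which shows that the two formulas agree almost surely and not merely in expectation. The paper's route is more stochastic-analytic and, as it remarks, extends straightforwardly to predictable non-Markov intensities; your route is more elementary and self-contained, and makes the algebraic equivalence of \eqref{eq:IBPFPoisson} and \eqref{eq:IBPFPoisson_2} transparent.
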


\begin{proof} 

Consider the left hand side of
(\ref{eq:IBPFPoisson})
$$
\El(\mathcal{D}_{u} \Phi)= \El \Big(\lim_{\e \rightarrow 0}\frac{1}{\e}\big(\Phi\circ \Theta^{\e}_u - \Phi \big) \Big) .
$$
Because of the smoothness of $\Phi$ and the boundedness of $\DD_u\Phi$, see \eqref{eq-015}, the expectation $\El (\mathcal{D}_{u} \Phi)$ is well defined and we
can exchange limit  and expectation. 
Therefore 
\begin{equation}\label{eq-021}
\El (\mathcal{D}_{u} \Phi)= \lim_{\e \rightarrow 0} \frac{1}{\e} \Big(\El \big(\Phi\circ  \Theta^{\e}_u\big) - \El \big(\Phi \big) \Big)
= \lim_{\e \rightarrow 0}\El \Big(\Phi \, \frac{G^{\e,u }-1}{\e}\Big) 
\end{equation}
where $G^{\e,u }:=d\Pl \circ (\Theta^{\e}_u)^{-1}/d\Pl.$\\
Now $\Pl \circ (\Theta^{\e}_u)^{-1}$ is a counting process with uniformly bounded intensity 
$\ell(\theta_u^\e,.)\dot \theta_u^\e = \ell(\theta_u^\e,.) (1 + \e \uu)$.
Therefore,  the Girsanov density of $\Pl \circ (\Theta^{\e}_u)^{-1}$ with respect to $\Pl$ verifies:
\begin{eqnarray*}
 G^{\e,u } 
 	& =&  
	 \exp- \Big(  \int _{ [0,1]}\big[ \ell(\theta_u^\e(t),X_{t^-}) (1 + \e \uu(t))- \ell(t,X_{t^-}) \big]\,dt\Big)
	 \prod_{i:T_i<1} \frac{\ell(\theta_u^\e(T_i),X_{T_i^-}) (1 + \e \uu(T_i))}{\ell( T_i, X_{T_i^-})}\\
 &=& \Big(1 - \e  \sum_{i:T_i<1} \big[ \frac{\ell'}{\ell}(T_i,X_{T_i^-})u(T_i)+ \uu(T_i)\big] \Big)
 \prod_{i:T_i<1} \Big(1 + \e  \big[ \frac{\ell'}{\ell}(T_i,X_{T_i^-})u(T_i)+ \uu(T_i)\big]  \Big)+ o(\e) \\
 	&=& 1 + \e \int_{[0,1]} \frac{\partial_t(\ell u)}{\ell} \big[- \ell(t,X_{t^-}) \, dt + dX_t )\big] + o(\e),\quad \Pl\textrm{-a.s.}
	\end{eqnarray*}
 which implies that 
\begin{eqnarray*}
  ({G^{\e,u } -1})/{\e} &=&  \int_{[0,1]}  \frac{\partial_t(\ell u)}{\ell} \big[dX_t - \ell(t,X_{t^-}) \, dt\big]+o(1),\ \Pl\textrm{-a.s.}
\end{eqnarray*}
 which  leads us to \eqref{eq:IBPFPoisson}.
 In particular, under the standard Poisson process $\Poi$,
 \begin{equation} \label{eq:FIP-Poisson}
  \mathbf{E} (\mathcal{D}_{u} \Phi)
 = \mathbf{E} \Big( \Phi \int_{[0,1]} \uu (t) \big( dX_t - dt \big) \Big)
\end{equation}
as it appears in \cite{CP88}. 

To prove the  duality formula \eqref{eq:IBPFPoisson_2} satisfied by $\Pl$, 
we take advantage of  its mutual absolute continuity with the Poisson measure $\Poi$ with the same initial distribution  as $\Pl$.
The density $G_\ell:= d\Pl/d\Poi$, given at \eqref{eq:girsanovunitjump}, is differentiable in the direction of any   $u \in \UU$
and 
\begin{equation}\label{eq-016}
\mathcal{D}_{u} G_\ell = - \, G_\ell \int_{[0,1] } \Xi_{\ell}(t,X_{t^-})u(t)\, dX_t .
\end{equation}
Indeed, one can write $G_{ \ell}$ as a function of the jump times: $G_{ \ell}= \varphi(X_0; T_1,\dots)$ where 
$$ 
\varphi(x;t_1,\dots)=\exp\Big(-\sum _{ i\ge0:t_i <1}\int _{ [t_i,t_{ i+1}\wedge1)}(\ell(t,x+i)-1)\,dt +\sum _{ i\ge1:t_i <1}\ln \ell(t_i, x+i-1)\Big)
$$
with the convention that $t_0=0$. 
Since the Poisson process performs almost surely finitely many jumps, 
we are allowed to invoke the first identity in  \eqref{eq-015} to obtain \eqref{eq-016}.
\\
Then, for any $\Phi \in \mathcal{S}$, 
\begin{eqnarray*}
\El \big( \Phi \int_{[0,1] } \uu(t)  dX_t  \big) 
&=& \EE \big(  G_\ell  \, \Phi \, \int_{[0,1] } \uu(t)  dX_t  \big)\\
&=& \EE \big(  G_\ell  \, \Phi \, \int_{[0,1] } \uu(t)  (dX_t-dt)  \big)\\
&\overset{ \eqref{eq:FIP-Poisson}}{=}& 
\EE \big(  \mathcal{D}_{u} ( G_\ell \, \Phi) \big) \\
&\overset{ \checkmark}{=}& 
\EE \big( G_\ell \,  \mathcal{D}_{u} \Phi \big)
+ \EE \big( \Phi \, \mathcal{D}_{u} G_\ell \big) \\
&\overset{ \eqref{eq-016}}{=}& \El \big(  \mathcal{D}_{u} \Phi \big)
- \El \big( \Phi \int_{[0,1] } \Xi_{\ell}(t,X_{t^-}) u(t)\, dX_t  \big),
\end{eqnarray*}
where we used at the marked equality the rule for the derivative of a product of functions, as proved in \cite[Thm.\,1.4]{CP88} . 
Therefore
$$
\El \big(  \mathcal{D}_{u} \Phi \big)
= \El \Big(\Phi \int_{[0,1] } \big( \uu(t) +  \Xi_{\ell}(t,X_{t^-}) u(t)\big) dX_t  \Big),
$$
which is the desired result.
\end{proof}

\begin{example}\label{ex:decay2}
Let $\Pl$ be a non-homogeneous Poisson process with  exponential intensity $\ell (t) = e^{\lambda t}, \lambda >0$. Then its reciprocal invariant is the constant  $\lambda$ and the duality formula \eqref{eq:IBPFPoisson_2} reduces to
\begin{equation}\label{eq:decaylaw}
\El ( \mathcal{D}_{u} \Phi)=\El^{xy} \Big( \Phi \int_{[0,1]} (\uu(t) + \lambda u(t)) dX_t  \Big) = \El \Big( \Phi \int_{[0,1]} \uu(t)  (dX_t - \lambda X_t  dt)  \Big)
\end{equation}
for any $\Phi \in \mathcal{S}$ and  $u \in \UU$.
\end{example}

\begin{remark} 
\begin{enumerate}[(a)]
\item
The Markov property of the intensity $\ell$ does not play  any role in the previous argumentation.
Identity \eqref{eq:IBPFPoisson} extends to a  large class of predictable regular intensities. 
\item
Take as intensity $\ell$ a constant number $\alpha >0$. Equation \eqref{eq:IBPFPoisson} reduces to
\begin{equation} \label{eq:IBPFPoissonalpha}
\forall \Phi\in \mathcal{S}, \forall u\in \mathcal{U}, 
\quad \EE_\alpha (\mathcal{D}_{u} \Phi)=  \EE_\alpha \big( \Phi \int_{[0,1]} \uu(t) (dX_t - \alpha \,
dt) \big)
\end{equation}
Relaxing the definition of $ \mathcal{D}_u$ for functions $u$ which do not vanish at time $t=1,$  one can obtain a  characterization of  the set $\{\Poi^\mu_\alpha; \mu \in \mathcal{P}(\Z) \}$ of all  Poisson processes with intensity $\alpha$.
\end{enumerate}
\end{remark}

\subsection*{Duality formula. Reciprocal class}\label{sec:dfreciuni}	

Our main result Theorem \ref{thm:derivcharacrec}
states a characterization of  a reciprocal class  in terms of the duality formula \eqref{eq:IBPFPoisson_2}. 
On the way  to its proof, 
we start noting in Lemma \ref{prop:ibpfunit} that the identity \eqref{eq:IBPFPoisson_2}  remains true for any bridge $\Rl ^{ xy}$ associated with the NMC intensity $\ell$.

\begin{lemma}\label{prop:ibpfunit}
Let $\ell$ be an NMC intensity and $x \le y\in \Z.$ Then the duality formula
\begin{equation}\label{eq:ibpfunit2}
\El^{xy} \big( \mathcal{D}_{u} \Phi\big)=
\El^{xy}\Big(\Phi\int _{ [0,1]} \big[ \uu(t)+ \Xi_{\ell}(t,X_{t^-})u(t)\big] \,dX_t\Big)
\end{equation}
holds for all $\Phi \in \mathcal{S}$ and all
$u\in \UU $.
\end{lemma}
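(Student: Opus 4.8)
The plan is to reduce \eqref{eq:ibpfunit2} to the process identity \eqref{eq:IBPFPoisson_2}, which is already available from Proposition \ref{thm:IPFPoi+Poi}, by disintegrating over the terminal state. First I would specialise that proposition to the initial law $\delta_x$, so that the underlying process is $R_\ell^x$ and \eqref{eq:IBPFPoisson_2} holds for $R_\ell^x$ and every test function in $\mathcal{S}$. Since $R_\ell^{xy}=R_\ell^x(\,\cdot\mid X_1=y)$, and since strict positivity together with the upper bound on $\ell$ force $R_\ell^x(X_1=y)>0$ for every $y\ge x$, the claim is equivalent, after dividing by this positive weight, to the single identity
\begin{equation*}
\El^{x}\big(\1_{\{X_1=y\}}\,\mathcal{D}_u\Phi\big)
=\El^{x}\Big(\1_{\{X_1=y\}}\,\Phi\int_{[0,1]}\big[\uu(t)+\Xi_\ell(t,X_{t^-})u(t)\big]\,dX_t\Big),
\end{equation*}
where $\El^{x}$ denotes the expectation under $R_\ell^x$.

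The mechanism making this reduction work is that $\Theta^{\e}_u$ leaves both endpoints, hence the number of jumps $X_1-x$, unchanged. Consequently $\1_{\{X_1=y\}}\,\mathcal{D}_u\Phi=\mathcal{D}_u\big(\1_{\{X_1=y\}}\Phi\big)$, and the factor $\1_{\{X_1=y\}}$ passes freely through the stochastic integral on the right. Were $\1_{\{X_1=y\}}\Phi$ a member of $\mathcal{S}$, the displayed identity would be nothing but \eqref{eq:IBPFPoisson_2} applied to it. The difficulty is precisely that $\1_{\{X_1=y\}}$, being determined by the jump count, is not a smooth function of the jump times, so $\1_{\{X_1=y\}}\Phi\notin\mathcal{S}$ and the proposition does not apply directly. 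My plan is therefore to approximate the indicator from within $\mathcal{S}$.

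Writing $m:=y-x$, one has $\{X_1=y\}=\{T_m<1=T_{m+1}\}$, so with a $\mathcal{C}^\infty$ cutoff $\rho_\delta:[0,1]\to[0,1]$ equal to $1$ at $t=1$ and vanishing on $[0,1-\delta]$, the functions $\Phi\,(\rho_\delta(T_{m+1})-\rho_\delta(T_m))$ belong to $\mathcal{S}$ and converge pointwise to $\1_{\{X_1=y\}}\Phi$ as $\delta\to0$. Applying \eqref{eq:IBPFPoisson_2} to them and letting $\delta\to0$, the right-hand side and the leading left-hand term $(\rho_\delta(T_{m+1})-\rho_\delta(T_m))\,\mathcal{D}_u\Phi$ converge by dominated convergence; here integrability causes no trouble because one may assume without loss of generality that $\Xi_\ell$ is bounded, altering $\ell$ at states strictly above $y$ changing neither the bridge $R_\ell^{xy}$ nor the integrand appearing in \eqref{eq:ibpfunit2}. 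The step I expect to be the main obstacle is the spurious boundary contribution $-\Phi\,\rho_\delta'(T_\bullet)\,u(T_\bullet)$ (with $T_\bullet$ either $T_m$ or $T_{m+1}$) produced when $\mathcal{D}_u$ differentiates the cutoffs through \eqref{eq-015}. This term is killed in the limit precisely because $u(1)=0$: on its shrinking support $\{T_\bullet\in(1-\delta,1)\}$ one has $|u(T_\bullet)|\le\|\uu\|_\infty(1-T_\bullet)\le\|\uu\|_\infty\,\delta$ while $|\rho_\delta'|\le C/\delta$, which yields a $\delta$-uniform bound and pointwise vanishing. What survives the limit is exactly the displayed identity, and dividing by $R_\ell^x(X_1=y)>0$ gives \eqref{eq:ibpfunit2}.
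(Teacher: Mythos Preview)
Your proposal is correct and follows the same idea as the paper: condition on the endpoints by exploiting that $\Theta^\e_u$ leaves $X_0$ and $X_1$ unchanged, so that a factor depending only on $(X_0,X_1)$ commutes with $\mathcal{D}_u$. The paper's proof simply applies \eqref{eq:IBPFPoisson_2} to test functions of the form $f_0(X_0)f_1(X_1)\Phi$ and invokes $\mathcal{D}_u f_0(X_0)=\mathcal{D}_u f_1(X_1)=0$; your smooth approximation of $\1_{\{X_1=y\}}$ by $\rho_\delta(T_{m+1})-\rho_\delta(T_m)$, together with the localisation of $\Xi_\ell$, makes this step rigorous and fills a technicality the paper leaves implicit.
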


\begin{proof}
Take the identity \eqref{eq:IBPFPoisson_2} and apply it with a test function of the form 
$f_0(X_0) f_1(X_1)  \Phi $ where $f_0,f_1:\Z\to \mathbb{R}$ and $\Phi \in \mathcal{S}$. 
Noting that 
$\mathcal{D}_{u}f_0 (X_0)= \mathcal{D}_{u} f_1 (X_1)= 0$,
we deduce that the identity  \eqref{eq:ibpfunit2} is valid for any bridge  $\Rl^{xy}.$ 
\end{proof}

Next result is concerned with the Poissonian case.  It shows that any Poisson bridge is characterized by a
simple duality formula.

\begin{proposition}\label{pr:rchp}
Let $Q \in \PO$ admit the endpoint marginal $Q_{01} = \delta_{(x,y)}$ with $x\le y\in\Z$. The process $Q$ is the Poisson bridge between $x$ and $y$ if and only if
\begin{equation} \label{equ:IBPFpontPoi}
E_Q (\mathcal{D}_{u} \Phi)
=
E_Q\Big(\Phi\int _{ [0,1]} \uu(t)\,dX_t\Big)
\end{equation}
 holds for any $\Phi \in \mathcal{S}$ and any 
$u \in \UU$. 
\end{proposition}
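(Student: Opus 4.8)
The plan is to prove both implications by reducing everything to the law of the jump times on a simplex.

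For the direct implication I would simply invoke Lemma \ref{prop:ibpfunit} with the unit intensity $\ell\equiv 1$. Its reciprocal invariant vanishes, since $\Xi_{1}(t,z)=\partial_t\log 1+\ell(t,z+1)-\ell(t,z)=0$, so formula \eqref{eq:ibpfunit2} specializes to exactly \eqref{equ:IBPFpontPoi} for the bridge $R_1^{xy}=\mathbf{R}^{xy}$. Hence the Poisson bridge satisfies the announced duality.

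For the converse I would first reduce to a finite-dimensional statement. Since $Q_{01}=\delta_{(x,y)}$ and the paths in $\OO$ are step functions with $+1$ jumps, $Q$-a.s.\ there are exactly $n:=y-x$ jumps $0<T_1<\cdots<T_n<1$ (the case $n=0$ being trivial). I would identify $Q$ with the law $\nu$ of $(T_1,\dots,T_n)$ on the open simplex $\Delta_n:=\{0<t_1<\cdots<t_n<1\}\subset\R^n$, noting that $\nu$ charges only the open simplex because the jump times are strictly ordered and lie in $(0,1)$. Testing \eqref{equ:IBPFpontPoi} against $\Phi=\varphi(X_0;T_1,\dots,T_n)\in\mathcal{S}$ and using Lemma \ref{lem:Derivative_Poisson} together with $\int_{[0,1]}\uu\,dX=\sum_{j=1}^n \uu(T_j)$, the duality formula becomes, for every smooth $\varphi$ and every $u\in\UU$,
$$
\int_{\Delta_n}\sum_{j=1}^n\partial_{t_j}\big(\varphi\,u(t_j)\big)\,d\nu=0 .
$$

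The remaining step is to deduce that $\nu$ is uniform. The displayed identity reads $\int_{\Delta_n}\mathrm{div}(\varphi\,V_u)\,d\nu=0$ with the vector field $V_u=(u(t_1),\dots,u(t_n))$. The crucial observation is that on the \emph{open} simplex the coordinates are automatically pairwise distinct and lie strictly inside $(0,1)$. Fixing an interior point $t^*$ and an index $k$, I would choose $u\in\UU$ equal to $1$ near $t_k^*$ and with support avoiding $0$, $1$ and the other $t_j^*$; then for $\varphi$ supported near $t^*$ the field $\varphi\,V_u$ reduces to $\varphi$ times the $k$-th coordinate direction, and the identity localizes to $\int \partial_{t_k}\varphi\,d\nu=0$. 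This forces $\partial_{t_k}\nu=0$ as a distribution on $\Delta_n$, for every $k$, so $\nu$ is a constant multiple of Lebesgue measure; normalization by $\nu(\Delta_n)=1$ and $\mathrm{vol}(\Delta_n)=1/n!$ yields $\nu=n!\,dt|_{\Delta_n}$. Since this is exactly the law of the order statistics of $n$ i.i.d.\ uniform variables on $[0,1]$, i.e.\ the jump-time law of the Poisson bridge, I conclude $Q=\mathbf{R}^{xy}$.

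I expect the main obstacle to be precisely this last localization: the test vector fields $V_u$ are constrained, since a single scalar $u$ (vanishing at $0$ and $1$) is evaluated in every coordinate, and one must extract from them enough information to annihilate the full distributional gradient of $\nu$. What makes it work is the strict ordering on the open simplex, which guarantees distinct coordinates and thus allows a single $u$ to isolate one coordinate at a time; the only other point to verify, namely that $\nu$ puts no mass on the faces of $\Delta_n$, is immediate from the strict inequalities defining $\OO$.
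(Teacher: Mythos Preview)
Your argument is correct but takes a genuinely different route from the paper's. For the converse, the paper computes the \emph{intensity} of $Q$ via a Nelson-type stochastic derivative: for fixed $t\in[0,1)$ it applies \eqref{equ:IBPFpontPoi} with $\Phi$ an $X_{[0,t)}$-measurable test function and a perturbation $u$ with $\uu\approx\tfrac{1}{\varepsilon}\1_{[t,t+\varepsilon]}-\tfrac{1}{1-(t+\varepsilon')}\1_{[t+\varepsilon',1]}$; this yields both the existence of the intensity $a(t)=\lim_{\varepsilon\to0}E_Q\big(\varepsilon^{-1}(X_{t+\varepsilon}-X_t)\mid X_{[0,t)}\big)$ and the closed form $a(t)=(y-X_t)/(1-t)$, which identifies $Q$ with the Poisson bridge. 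Your approach instead exploits that the number of jumps is fixed at $n=y-x$ to reduce to a measure $\nu$ on the open simplex $\Delta_n$, rewrites the duality as $\int_{\Delta_n}\operatorname{div}(\varphi V_u)\,d\nu=0$, and then localizes (using that the coordinates are pairwise distinct on $\Delta_n$, so a single $u\in\UU$ can isolate one coordinate at a time) to conclude $\partial_{t_k}\nu=0$ distributionally for every $k$, hence $\nu=n!\,dt|_{\Delta_n}$. The paper's method keeps everything in the language of intensities, makes the bridge dynamics transparent, and explains why the same strategy becomes only implicit when $\Xi_\ell\neq0$ (as noted before Theorem~\ref{thm:derivcharacrec}); your method is more elementary, avoids any appeal to compensators or stochastic derivatives, and sidesteps the minor technicality that the paper's piecewise-linear $u$ is not literally in $\UU$ and needs smoothing.
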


\begin{proof}  
The direct part is a particular case of Lemma \ref{prop:ibpfunit}.\\ 
Let us prove the converse statement by computing the jump intensity of $Q$ by means of a Nelson stochastic derivative.
\\
 Fix $t\in [0,1)$. We are going to apply  \eqref{equ:IBPFpontPoi} with   $\Phi$ an $X _{ [0,t)}$-measurable function  and $u\in \mathcal{U}$ such that 
 $\uu= \frac{1}{\varepsilon}\1_{ [ t,t+ \varepsilon]}- \frac{1}{1-(t+\varepsilon')}\1_{ [t + \varepsilon',
1]}$ where $0 < \varepsilon <\varepsilon'.$  We obtain the following equality
 $$
 E_{Q}\Big(\frac{1}{\varepsilon}\int_{[ t,t+ \varepsilon]} dX_r \mid X _{ [0,t)}\Big)
=  E_{Q}\Big(\frac{1}{1-(t+\varepsilon')}\int_{[ t + \varepsilon', 1]} dX_r \mid X _{ [0,t)}\Big)
 $$
for every  small enough $\varepsilon'.$
Remark that both sides of the equality are constant as functions of $\e$ and $\e'$. In particular, for almost every $t$
 the stochastic derivative 
 $$
 a(t):=\lim _{ \e\to 0}E_{Q}\Big(\frac{1}{\varepsilon}\int_{[ t,t+ \varepsilon]} dX_r \mid X _{ [0,t)}\Big)
 $$
 exists (and is equal to the right hand side). This shows that $Q$
admits the $dt \,Q(d \omega)$-almost everywhere defined process $(t, \omega)\mapsto a(t, \omega)$ as its intensity.
Letting $\e'$ tend to zero gives
\begin{equation}\label{eq-101}
 a(t)= E_{Q}\Big(\frac{1}{1-t}(X_1 - X_t) \mid X _{ [0,t)}\Big)= \frac{y-X_t}{1-t}.
\end{equation}
We recognize the intensity of a Poisson bridge  at time $t$ with final condition $y$.
\end{proof}

By randomizing the endpoint marginal of $Q$ in \eqref{equ:IBPFpontPoi}, we obtain the following 

\begin{corollary} \label{cor:recpoichar}
 If for any $\Phi \in \mathcal{S}$ and any $u \in \UU$ the duality formula
\begin{equation*}
E_Q (\mathcal{D}_{u} \Phi) 
=
E_Q\Big(\Phi\int _{ [0,1]} \uu(t)\,dX_t\Big)
\end{equation*}
 holds under $Q\in \PO$ where $E_Q\big( X_1 - X_0\big) < +\infty$,  then $Q$ belongs to the reciprocal class $\Rec(1)$ of the Poisson process.
\end{corollary}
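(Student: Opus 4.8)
The plan is to deduce Corollary~\ref{cor:recpoichar} from Proposition~\ref{pr:rchp} by conditioning on the endpoints $(X_0,X_1)$ and verifying that each regular conditional distribution is a Poisson bridge. First I would fix the endpoint marginal $Q_{01}$ and disintegrate $Q$ as $Q(\cdot) = \int_{\Z^2} Q^{xy}(\cdot)\, Q_{01}(dx\,dy)$, where $Q^{xy}:=Q(\cdot\mid X_0=x,X_1=y)$ is a regular conditional law concentrated on paths with $X_0=x$, $X_1=y$. The integrability assumption $E_Q(X_1-X_0)<\infty$ guarantees that $Q_{01}$ has finite first moment and is concentrated on $\{x\le y\in\Z\}$, so the disintegration is well defined and the number of jumps is $Q$-integrable; this is what will later let me pass expectations through the integral against $Q_{01}$.

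Next I would upgrade the global duality formula to a conditional one. The key algebraic step is the same trick as in Lemma~\ref{prop:ibpfunit}: for $f_0,f_1:\Z\to\R$ bounded and $\Phi\in\SU$, apply the hypothesis to the test function $f_0(X_0)f_1(X_1)\Phi$. Since the perturbation operator $\Theta^\e_u$ fixes the endpoints, we have $\DD_u\big(f_0(X_0)f_1(X_1)\Phi\big)=f_0(X_0)f_1(X_1)\,\DD_u\Phi$, so the factor $f_0(X_0)f_1(X_1)$ passes through both sides of the identity. Because this holds for all such $f_0,f_1$, it follows that for $Q_{01}$-almost every $(x,y)$ the measure $Q^{xy}$ satisfies
\begin{equation*}
E_{Q^{xy}}(\mathcal{D}_{u}\Phi)=E_{Q^{xy}}\Big(\Phi\int_{[0,1]}\uu(t)\,dX_t\Big)
\end{equation*}
for all $\Phi\in\SU$ and $u\in\UU$. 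Here I would be a little careful that the exceptional $Q_{01}$-null set of endpoints can be chosen independently of $(\Phi,u)$; this is handled by working with a countable dense family of test pairs and invoking continuity of both sides in the relevant parameters, so that the identity extends to all $(\Phi,u)$ off a single null set.

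Having produced, for $Q_{01}$-a.e.\ $(x,y)$, a process $Q^{xy}$ with endpoint marginal $\delta_{(x,y)}$ satisfying exactly the hypothesis \eqref{equ:IBPFpontPoi} of Proposition~\ref{pr:rchp}, I would apply that proposition to conclude $Q^{xy}=\mathbf{R}^{xy}=R_1^{xy}$, the Poisson bridge between $x$ and $y$. Substituting back into the disintegration yields
\begin{equation*}
Q(\cdot)=\int_{\Z^2} R_1^{xy}(\cdot)\,Q_{01}(dx\,dy),
\end{equation*}
which is precisely the defining property \eqref{eq-12} of membership in $\Rec(1)$. Thus $Q\in\Rec(1)$.

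The main obstacle I anticipate is the measurability and null-set bookkeeping in the conditioning step, rather than any new analytic input: one must ensure that the family $(Q^{xy})$ is a genuine regular conditional distribution and that the conditional duality formula holds simultaneously for all $(\Phi,u)$ outside a single $Q_{01}$-negligible set of endpoints. The finite-first-moment hypothesis $E_Q(X_1-X_0)<\infty$ is exactly what makes the jump count integrable and the disintegration against $Q_{01}$ legitimate, so I would flag it as the hypothesis that licenses passing from the global identity to the per-bridge identity. Once the conditional formula is secured, the conclusion is immediate from Proposition~\ref{pr:rchp}.
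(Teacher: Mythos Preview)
Your proposal is correct and follows essentially the same approach as the paper, which merely states that the corollary is obtained ``by randomizing the endpoint marginal of $Q$'' in Proposition~\ref{pr:rchp} without spelling out details. Your conditioning argument via test functions of the form $f_0(X_0)f_1(X_1)\Phi$ is precisely the mechanism behind that one-line remark (and is the same trick used in Lemma~\ref{prop:ibpfunit}); the measurability and null-set concerns you flag are the right technical points to check, though the paper leaves them implicit.
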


Next result  emphasizes that the duality formula (\ref{eq:ibpfunit2}) 
characterizes the reciprocal class  of any NMC intensity $\ell$.  A natural idea would be to follow the guideline of the proof of Proposition \ref{pr:rchp}. Unfortunately, this leads to an implicit equation for the intensity, in contrast with the special Poissonian case where the reciprocal invariant $\Xi _{ \ell=1}$ vanishes and gives \eqref{eq-101}. However, a fruitful method consists in relying on the last corollary and the fact that any $Q\in \PO$ is dominated by some Poisson process.

\begin{theorem}\label{thm:derivcharacrec}
Let $Q \in \PO$ be  such that $E_Q\big( X_1 - X_0\big) < +\infty$.
Then $Q$ is in $\Rec(\ell)$ if and only if the duality formula
\begin{equation} \label{eq:IBPFrecipr}
E_Q (\mathcal{D}_{u} \Phi)
=
E_Q\Big(\Phi\int _{ [0,1]} \big[\uu(t)+ \Xi_{\ell}(t,X_{t^-})u(t)\big]\,dX_t\Big)
\end{equation}
holds for any $\Phi \in \mathcal{S}$ and any 
$u \in \UU$.
\end{theorem}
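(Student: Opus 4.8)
The direct implication is immediate: if $Q \in \Rec(\ell)$, then by Definition \ref{def-clasrec} we may write $Q(\cdot)=\int_{\Z^2} R_\ell^{xy}(\cdot)\,Q_{01}(dxdy)$, and the duality formula \eqref{eq:IBPFrecipr} follows by integrating the bridge identity \eqref{eq:ibpfunit2} from Lemma \ref{prop:ibpfunit} against $Q_{01}(dxdy)$, using linearity of both sides in the path measure. The integrability condition $E_Q(X_1-X_0)<+\infty$ guarantees the stochastic integral on the right is well defined.

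The plan for the converse is to reduce the statement for a general NMC intensity $\ell$ to the already-established Poissonian case of Corollary \ref{cor:recpoichar}, exploiting the mutual absolute continuity of $P_\ell$ and $\Poi$ recorded in Proposition \ref{lem:girsanovunit}. Concretely, suppose \eqref{eq:IBPFrecipr} holds under $Q$. The idea is to tilt $Q$ by the Girsanov-type density that removes the intensity $\ell$ and returns to the Poisson dynamics: define a new measure $\widetilde{Q}:=G_\ell^{-1}\,Q / E_Q(G_\ell^{-1})$, where $G_\ell = d\Pl/d\Poi$ is given by \eqref{eq:girsanovunitjump}. First I would verify that $G_\ell^{-1}$ is, up to an $(X_0,X_1)$-measurable factor, the density relating bridges of $\Pl$ to bridges of $\Poi$; indeed the product-and-exponential structure of \eqref{eq:girsanovunitjump}, combined with the computation in the proof of Theorem \ref{thm:clarkpoisson} showing that $\log(k/\ell)$ integrates to an endpoint function, tells us that on each bridge the two dynamics differ only through $\Xi_\ell$.

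The main technical step is to transport the duality formula across this change of measure. Using \eqref{eq-016}, namely $\mathcal{D}_u G_\ell = -G_\ell\int_{[0,1]}\Xi_\ell(t,X_{t^-})u(t)\,dX_t$, together with the product rule for $\mathcal{D}_u$ from \cite[Thm.\,1.4]{CP88}, I would compute $E_{\widetilde{Q}}(\mathcal{D}_u\Phi)$ by writing $\mathcal{D}_u(G_\ell^{-1}\Phi) = G_\ell^{-1}\mathcal{D}_u\Phi + \Phi\,\mathcal{D}_u(G_\ell^{-1})$ and noting that $\mathcal{D}_u(G_\ell^{-1}) = G_\ell^{-1}\int_{[0,1]}\Xi_\ell(t,X_{t^-})u(t)\,dX_t$. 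Feeding the hypothesis \eqref{eq:IBPFrecipr} into this algebra, the term carrying $\Xi_\ell$ cancels exactly, and one is left with the pure-Poisson duality formula $E_{\widetilde{Q}}(\mathcal{D}_u\Phi)=E_{\widetilde{Q}}(\Phi\int_{[0,1]}\uu(t)\,dX_t)$, valid for all $\Phi\in\mathcal{S}$, $u\in\UU$. Since $\widetilde Q$ inherits the integrability $E_{\widetilde Q}(X_1-X_0)<\infty$ from the boundedness of $\ell$ and of $G_\ell^{-1}$, Corollary \ref{cor:recpoichar} applies and yields $\widetilde{Q}\in\Rec(1)$. By Proposition \ref{prop:Q=hP}, $\widetilde Q = \widetilde h(X_0,X_1)\,\Poi$ for some nonnegative $\widetilde h$; untilting, $Q = G_\ell\,\widetilde h(X_0,X_1)\,\Poi$, and since $G_\ell\,\Poi = \Pl = R_\ell$ up to the initial law, $Q$ is an endpoint-function multiple of $R_\ell$, whence $Q\in\Rec(\ell)$ by Proposition \ref{prop:Q=hP} again.

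The delicate point I expect to be the main obstacle is the careful justification that the cancellation of the $\Xi_\ell$-terms is exact and that all the manipulations of $\mathcal{D}_u$ under the tilted expectation are legitimate — in particular that $G_\ell^{-1}$ and its products with simple functions lie in the domain of the duality formula and that the product rule of \cite{CP88} applies to $G_\ell^{-1}\Phi$ even though $G_\ell^{-1}\Phi$ need not itself belong to $\mathcal{S}$. This requires an approximation argument, replacing $G_\ell^{-1}$ by a sequence of functions of finitely many jump times (as was done for $G_\ell$ in the proof of Proposition \ref{thm:IPFPoi+Poi}, writing it explicitly as $\varphi(X_0;T_1,\dots)$) and controlling the resulting limits using the boundedness of $\ell$ and $\Xi_\ell$ and the finiteness of the expected number of jumps.
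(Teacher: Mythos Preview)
Your overall strategy coincides with the paper's: tilt by $G_\ell^{-1}$ to reduce to the Poissonian duality of Corollary \ref{cor:recpoichar}, then untilt via Proposition \ref{prop:Q=hP}. The direct implication and the algebraic cancellation of the $\Xi_\ell$-term are handled exactly as in the paper.

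There is, however, a genuine gap in the converse. You assert that ``$\widetilde Q$ inherits the integrability $E_{\widetilde Q}(X_1-X_0)<\infty$ from the boundedness of $\ell$ and of $G_\ell^{-1}$'', but $G_\ell^{-1}$ is \emph{not} bounded on $\Omega$. From \eqref{eq:girsanovunitjump},
\[
G_\ell^{-1}=\exp\Big(\int_{[0,1]}(\ell(s,X_{s^-})-1)\,ds\Big)\prod_{i:T_i<1}\ell(T_i,X_{T_i^-})^{-1},
\]
and the product contains $X_1-X_0$ factors, each of which may be as large as $(\inf\ell)^{-1}$ (Definition \ref{def:regularmarkov} gives no positive lower bound on $\ell$, and even if one had $\ell\ge\underline\lambda>0$ the product would still grow like $\underline\lambda^{-(X_1-X_0)}$). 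Hence under the sole hypothesis $E_Q(X_1-X_0)<\infty$ you cannot guarantee that $E_Q(G_\ell^{-1})<\infty$, let alone that $E_{\widetilde Q}(X_1-X_0)<\infty$, so Corollary \ref{cor:recpoichar} cannot be invoked for $\widetilde Q$ as written. The approximation you sketch in the last paragraph does not address this: it is an issue of integrability, not of smoothness or of $G_\ell^{-1}\Phi$ belonging to $\mathcal S$.

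The paper repairs exactly this point by first conditioning on $A_n:=\{X_1-X_0=n\}$ and working with $Q^n:=Q(\,\cdot\mid A_n)$. On $A_n$ there are precisely $n$ jumps, so $G_\ell^{-1}$ is a smooth function of $(X_0;T_1,\dots,T_n)$ which is uniformly bounded above and below (by $e^{\bar\lambda}\sup_{[0,1]\times\{x,\dots,x+n-1\}}\ell^{-n}$ and the analogous lower bound). The tilted measure $\widetilde Q^n:=c\,G_\ell^{-1}Q^n$ is then well defined, trivially satisfies $E_{\widetilde Q^n}(X_1-X_0)=n<\infty$, and the product rule for $\mathcal D_u$ applies without any further approximation. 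One obtains $\widetilde Q^n\in\Rec(1)$, hence $Q^n\in\Rec(\ell)$, and summing over $n$ gives $Q\in\Rec(\ell)$. Inserting this conditioning step at the outset of your converse makes your argument complete and essentially identical to the paper's.
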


Note that each term in (\ref{eq:IBPFrecipr}) is meaningful since $\Xi_{\ell}$ is bounded and $X_1 - X_0 \in L^1(Q)$. \\

\begin{proof}
The direct statement follows from
Definition \ref{def-clasrec} and Lemma \ref{prop:ibpfunit}.\\ 
Let us prove its converse. We define the event
$ A_n = {\{X_1-X_0=n\}}$	and consider $n$ such that $Q(A_n)>0$. 
We note that \eqref{eq:IBPFrecipr} is satisfied by the measure $Q^{n}:= \frac{\1_{A_n}}{Q(A_n)}Q$ as well. 
By \eqref{eq-016}, $ G_{\ell}:= dP_\ell/d\Poi $ given at \eqref{eq:girsanovunitjump} is
differentiable in any direction $u \in \mathcal{U}$. 
We  define the probability measure $\widetilde{Q}^n$ as
follows
\begin{equation}
d \widetilde{Q}^n := c \,  G^{-1}_{\ell} d  Q^n
\end{equation}  
where $c$ is the normalising constant. 
Since $G^{-1}_{\ell}$ is uniformly bounded from above and below on $A_n$, $\widetilde{Q}^n$ is well defined.  With \eqref{eq-016}, our assumption \eqref{eq:IBPFrecipr} leads us to 
\begin{multline}\label{eq-44}
E_{{Q}^n} \mathcal{D}_u(G_\ell ^{ -1} \Phi)
	= E_{{Q}^n} \Big(G_\ell ^{ -1} \Phi \int_0^1 [\uu_t+ \Xi_\ell(t)u_t]\, dX_t\Big)\\
		=E_{{Q}^n} \Big(G_\ell ^{ -1} \Phi \int_0^1 \uu_t\, dX_t\Big)+E_{{Q}^n} \Big(\mathcal{D}_u(G_\ell ^{ -1}) \Phi \Big).
\end{multline}
Hence,
\begin{eqnarray*}
E_{\widetilde{Q}^n} (\mathcal{D}_u \Phi ) 
	&=&  c \, E_{Q^{n}} \big( G_{\ell}^{ -1} \mathcal{D}_u \Phi \big) 
= -c \,E_{Q^{n}} ( \mathcal{D}_u (G^{-1}_{\ell})   \Phi ) + c\,E_{Q^{n}} 
\mathcal{D}_u(G^{-1}_{\ell} \Phi )\\
	&\overset{ \eqref{eq-44}}{=}&  c \,E_{Q^{n}} \Big(
G^{-1}_{\ell} \Phi \int _{ [0,1]}  \uu(t) dX_t \Big) 
	=   E_{\widetilde{Q}^n} \Big( \Phi \int _{ [0,1]}  \uu(t) dX_t \Big).
\end{eqnarray*}
It follows from Corollary \ref{cor:recpoichar}  that $\widetilde{Q}^n \in \Rec(1)$. By 
Proposition \ref{prop:Q=hP} there exists $h$ such that $ d\widetilde{Q}^n = h(X_0,X_1)\, \Poi $. But this implies that
$$
dQ^{n} = c ^{ -1} \, G_{\ell} \, d \widetilde{Q}^n =
c ^{ -1} \, G_{\ell} \, h(X_0,X_1) \, d \Poi = c ^{ -1} \, h(X_0,X_1) \, d P_{\ell} 
$$
and therefore $Q^{n} \in \Rec( \ell)$. 
By integrating with respect to $n,$ we obtain that $Q \in \Rec(\ell)$ which is the desired result.
\end{proof}

Theorem \ref{thm:derivcharacrec} improves Theorem \ref{thm:clarkpoisson} significantly because
(i) it is not required a priori that the process which stands in the reciprocal class is an NMC process and (ii)
no explicit expression of its intensity of jump is  required.


\end{document}